\definecolor{Red}{rgb}{0.7,0,0.1}
\definecolor{Green}{rgb}{0,0.7,0}
\numberwithin{equation}{section}
\theoremstyle{plain}
\newtheorem{THEOREM}{Theorem}[section]
\newtheorem{theorem}[THEOREM]{Theorem}
\newtheorem{corollary}[THEOREM]{Corollary}
\newtheorem{lemma}[THEOREM]{Lemma}
\theoremstyle{definition}
\newtheorem{case}{Case}
\theoremstyle{remark}
\newtheorem{remark}[THEOREM]{Remark}
\def \v {V^{(1)}}
\def \vt {\widetilde{V}^{(1)}}
\def \bu {{\bf u}}
\def \cC {\mathcal{C}}
\newcommand{\N}{\ensuremath{\mathbb{N}}}   
\newcommand{\R}{\ensuremath{\mathbb{R}}}   
\newcommand{\der}[2]{#1 \cdot \nabla #2}
\DeclareMathOperator{\supp}{supp} %
\newcommand{\ind}{\mathds{1}}
\newcommand{\dep}{\delta_p}
\newcommand{\tU}{\widetilde{U}^{(1)}}
\newcommand{\Ay}{A_y}
\newcommand{\tp}{\tilde{p}}
\newcommand{\ta}{\tilde{a}}
\newcommand{\tb}{\tilde{b}}
\newcommand{\td}{\tilde{d}}
\newcommand{\tbu}{\tilde{\bf u}}
\begin{document}

\title{On the locally self-similar blowup for the generalized SQG equation}
\author{Anne Bronzi\thanks{acbronzi@unicamp.br}, Ricardo Guimar\~aes\thanks{r192301@dac.unicamp.br}, Cecilia Mondaini\thanks{cf823@drexel.edu}}

\maketitle

\begin{abstract}
We analyze finite-time blowup scenarios of locally self-similar type for the inviscid generalized surface quasi-geostrophic equation (gSQG) in $\R^2$. Under an $L^r$ growth assumption on the self-similar profile and its gradient, we identify appropriate ranges of the self-similar parameter where the profile is either identically zero, and hence blowup cannot occur, or its $L^p$ asymptotic behavior can be characterized, for suitable $r, p$. Our results extend the work by Xue \cite{Xue16} regarding the SQG equation, and also partially recover the results proved by Cannone and Xue \cite{CX15} concerning globally self-similar solutions of the gSQG equation. 	
\end{abstract}

{\noindent \small
  {\it \bf Keywords: generalized surface quasi-geostrophic equation, locally self-similar solution, finite-time singularity. } \\
  {\it \bf MSC2020: 35C06, 35B44, 35Q35, 35R11, 35Q86} }

\section{Introduction}\label{sec:intro}

This paper concerns the study of possible self-similar finite-time blowup scenarios for the generalized surface quasi-geostrophic equation (gSQG) in $\R^2$, namely
\begin{equation}\label{DSQG}
\left\{\begin{array}{ll}
\theta_t + \der{\bu}{\theta}  =0,  &  x \in \R^2, \,\, t > 0,\\
\bu  = - \nabla^\perp (- \Delta)^{-1 + \frac{\beta}{2}} \theta,  & x \in \R^2, \,\, t > 0, 
\end{array}\right.
\end{equation}
where $\beta \in (0,2)$ is a fixed parameter, $\theta = \theta(x,t)$ is an unknown scalar function,
and $\bu = \bu(x,t)$ denotes a velocity field. The latter is given in terms of $\theta$ according to the second equation in \eqref{DSQG}, where $\nabla^\perp = (-\partial_2, \partial_1)$, and $(-\Delta)^{-s/2}$, $0 < s < 2$, is the Riesz potential. From the definition of the Riesz potential (see e.g. \cite[Section V.1]{Stein}), it follows that $\bu$ can also be written as 
\begin{align}\label{velocity_field}
	\bu(x,t) = C_\beta P.V. \int_{\R^2} K_\beta(x - y) \theta(y,t) dy,
\end{align}
where 
\begin{align}\label{kernel}
	K_\beta(x) = \frac{x^\perp}{|x|^{2+\beta}}, \quad x \in \R^2 \backslash \{0\},
\end{align}
and $C_\beta$ is a constant depending only on $\beta$.

For $\beta = 0$, equation \eqref{DSQG} reduces to the vorticity formulation of the 2D incompressible Euler equations, a model for the evolution of inviscid and incompressible fluid flows in $\R^2$. Whereas for $\beta = 1$, \eqref{DSQG} coincides with the surface quasi-geostrophic equation (SQG), which models the evolution of surface temperature or buoyancy in certain large-scale atmospheric or oceanic flows \cite{Johnson1978,Blumen1978,Pedlosky1987,held1995surface}. Besides its physical relevance, the SQG equation has also received considerable attention due to its strong analytical and physical similarities to the 3D incompressible Euler equations \cite{PA94}. 

While global regularity for the 2D incompressible Euler equations is well established (see e.g. \cite{MBbook,MPbook}), the analogous question for the SQG equation remains completely open. Namely, it is not currently known whether smooth solutions of the SQG equation remain smooth for all time or develop singularities in finite time. In light of these results (or lack thereof), the generalized SQG equation \eqref{DSQG} was introduced in \cite{cordoba2005evidence} to naturally investigate the global regularity issue for a model that suitably interpolates between the 2D incompressible Euler equations ($\beta =0$) and the SQG equation ($\beta = 1$). Indeed, note from \eqref{DSQG} that for $\beta \in (0,1)$ the velocity field $\bu$ is more regular than at the right endpoint $\beta = 1$. The case $\beta \in (1,2)$, on the other hand, corresponds to a more singular velocity field, and was first considered in \cite{CCC}.

Despite several important advances, the question of global regularity or finite-time singularity formation also remains open for the gSQG equation, for any $\beta \in (0,2)$. Among the available results, local existence and uniqueness for the Cauchy problem associated to \eqref{DSQG} in the range $\beta \in (1,2)$ was shown in \cite{CCC} for any initial data in $H^4$, and later improved in \cite{hu2015existence} to any initial data in $H^s$, with $s > 1 + \beta$. An analogous local well-posedness result in $H^s$, $s > 1 + \beta$, for the more regular case $\beta \in (0,1]$ was shown in detail in \cite{inci2018well,yu2021local}. Additionally, a global regularity criterion in the case $\beta \in (0,1]$ was obtained in \cite{chae2011inviscid} with respect to the norm of a given solution in $\beta$-H\"older spaces, which generalizes a previous regularity criterion established for the SQG in \cite{PA94}. Specifically, \cite{chae2011inviscid} shows that $[0,T)$ is a maximal interval of existence for a solution $\theta$ of \eqref{DSQG} within the class $\cC^\sigma(\R^2) \cap L^q(\R^2)$, with $\sigma >1$ and $q >1$, if 
\begin{align}\label{reg:crit}
	\lim_{t \to T} \int_0^t \| \theta(\cdot, s)\|_{\cC^\beta(\R^2)} ds = \infty,
\end{align}
where $\cC^\gamma(\R^2)$, $0 < \gamma \leq 1$, denotes the space of $\gamma$-H\"older continuous functions on $\R^2$.

In addition to these analytical results, several computational studies were developed to numerically investigate the possibility of finite-time singularity formation for the SQG and gSQG in specific scenarios. Starting with the SQG equation, \cite{PA94} indicated a possible finite-time singularity in the form of a hyperbolic closing saddle, a suggestion that was later contested in \cite{ohkitani1997inviscid,CNS1998,constantin2012new} via further numerical tests, and eventually theoretically ruled out in \cite{cordoba1998nonexistence,cordoba2002growth}. On the other hand, in \cite{scott2011scenario,scott2014numerical}, analyzing an alternative scenario proposed by \cite{PierrehumbertEtAl1994,HoyerSadourny1982}, the authors found numerical evidence of a singularity occurring as a self-similar cascade of filament instabilities. Regarding the generalized SQG equation \eqref{DSQG}, numerical simulations were performed in \cite{cordoba2005evidence,Mancho2015,scott2019scale} focusing on the evolution of patch-like initial data, i.e. given by the indicator function of a spatial domain with smooth boundary \cite{Rodrigo2004,Rodrigo2005,gancedo2008existence,CCC}. Their results point to substantial evidence in support of the development of a corner-type singularity in finite time, which is approached in a self-similar manner.

While a rigorous proof of the formation of such singularities is still not available, these numerical studies provide a strong motivation to further investigate solutions of the gSQG equation that develop a finite-time singularity of self-similar type. Such solutions are defined with respect to the invariance of \eqref{DSQG} under the following scaling transformation $x, t, \theta \mapsto  \lambda x, \lambda^{1 + \alpha} t,$ $\lambda^{1 + \alpha - \beta} \theta$, with $\lambda \in \R^+$, $\alpha \in \R$; i.e. if $\theta$ is a solution of \eqref{DSQG}, then $\theta_{\lambda}(x, t) = \lambda^{1 + \alpha -\beta} \theta (\lambda x,$ $\lambda^{1+\alpha} t)$ is also a solution. Specifically, we say that, for a fixed scaling parameter $\alpha > -1$, a solution $\theta$ of \eqref{DSQG} on $\R^2$ and on a time interval $(0,T)$ is \emph{(globally) self-similar} if $\theta(x,t) = \theta_\lambda(x,t)$ for all $(x,t) \in \R^2 \times (0,T)$ and for all $\lambda > 0$. This is equivalent to being able to write $\theta$ as
\begin{align}\label{eq:theta:profile}
	\theta(x,t) = \frac{1}{t^{\frac{1+\alpha - \beta}{1 + \alpha}}} \Theta \left( \frac{x}{t^{\frac{1}{1+\alpha}}}\right) \quad \mbox{for all } (x,t) \in \R^2 \times (0,T),
\end{align}
for some function $\Theta: \R^2 \to \R$, which is called an associated \emph{self-similar profile}.

Some results on nonexistence of nontrivial globally self-similar solutions for the SQG and gSQG equations were obtained in \cite{Chae07,Chae11} and \cite{CX15}, respectively, by imposing suitable assumptions on the profile $\Theta$ and showing that $\Theta \equiv 0$ as a consequence, thus excluding the possibility of finite-time singularity of this type. More precisely, \cite{Chae07} assumed $\Theta \in L^{p_1}(\R^2) \cap L^{p_2}(\R^2)$ with $p_1, p_2 \in [1,\infty]$ and $p_1 < p_2$, whereas \cite{Chae11} considered $\Theta \in C^1(\R^2)$ such that $\lim_{|x| \to \infty} |\Theta(x)| = 0$. Both \cite{Chae07} and \cite{Chae11} utilize a particle trajectory and back-to-labels map approach to establish that $\Theta \equiv 0$. In \cite{CX15}, the authors analyze the gSQG equation in the case $\beta \in [0,1]$ and obtain an analogous result as in \cite{Chae07} while relying on a different technique centered on a local $L^p$ inequality satisfied by the profile $\Theta$.

We also mention the recent work \cite{garcia2022self}, where construction of a class of non-radial globally self-similar solutions with infinite energy of the gSQG  in the case $\beta \in (0,1)$ was obtained via suitable perturbations of a stationary solution. See additionally \cite{CastroCordoba2010}, where the authors consider solutions of the SQG equation in $\R^2$ of the form $\theta(x_1,x_2,t) = x_2 f_{x_1}(x_1,t)$ and construct a self-similar solution for the one-dimensional equation satisfied by $f$ which yields an infinite-energy solution for the SQG.

In this manuscript, we consider the more general case of solutions $\theta$ of \eqref{DSQG} that satisfy an equality as in \eqref{eq:theta:profile} only locally in space, namely with $(x,t) \in B_\rho(0) \times (0,T)$, for some $\rho > 0$\footnote{Note that, in contrast to the identity $\theta(x,t) = \theta_\lambda(x,t)$ for all $(x,t) \in \R^2 \times (0,T)$ and $\lambda > 0$ satisfied by $\theta$ in the globally self-similar case, a local version of \eqref{eq:theta:profile} with $x \in B_\rho(0)$ implies instead that $\theta(x,t) = \theta_\lambda(x,t)$ for all $x \in B_{\min\{\rho, \rho/\lambda\}} (0)$, $t \in \left(0, \min\left\{T, \frac{T}{\lambda^{1 + \alpha}}\right\} \right)$, and $\lambda >0$.}. Here, $B_\rho(0)$ denotes the ball in $\R^2$ centered at $0$ and with radius $\rho$. In fact, since, for any $x_0 \in \R^2$, $\tilde{\theta}(x,t) = - \theta(x-x_0, T-t)$, $(x,t) \in \R^2 \times (0,T)$, is also a solution of \eqref{DSQG} due to its spatial translation and time reversal symmetries, we may consider more generally solutions $\theta$ of \eqref{DSQG} that satisfy
\begin{align}\label{ss:x0:T}
	\theta(x,t) = \frac{1}{(T - t)^{\frac{1+\alpha - \beta}{1 + \alpha}}} \Theta \left( \frac{x - x_0}{(T - t)^{\frac{1}{1+\alpha}}}\right) \quad \mbox{for all } (x,t) \in B_\rho(x_0) \times (0,T),
\end{align}
for some $\rho > 0$ and some profile function $\Theta: \R^2 \to \R$. We refer to such $\theta$ as a \emph{locally self-similar solution}.

It is not difficult to see that if $\Theta \in \cC^\beta (\R^2)$ then condition \eqref{ss:x0:T} is indeed consistent with the regularity criterion from \cite{chae2011inviscid} when $\beta \in (0,1]$. Namely, \eqref{ss:x0:T} implies \eqref{reg:crit}, and hence $T$ represents a finite blowup time for $\theta$ in the class  $\cC^\sigma(\R^2) \cap L^q(\R^2)$, with $\sigma >1$ and $q >1$.

Analytical results regarding locally self-similar singularity scenarios were previously obtained for the $N$-dimensional incompressible Euler equations with $N \geq 3$ in \cite{ChaeShvydkoy2013,AR15,Xue2015}, 
for the SQG equation in \cite{Xue16}, and later for the 2D inviscid Boussinesq equations in \cite{Ju2020}. 
In particular, the result obtained in \cite{Xue16} yields, similarly as in the earlier works \cite{AR15,Xue2015}, suitable conditions on the self-similar profile under which existence of nontrivial $\Theta$ is only possible within an explicitly identified range of $\alpha$, i.e. $\Theta$ must necessarily be zero for $\alpha$ outside of this range. Moreover, any nontrivial profile corresponding to a value of $\alpha$ in this range must satisfy a certain asymptotic characterization of its $L^p$ average over sufficiently large regions in the spatial domain, for some $p >1$. As a consequence, this allows one to automatically exclude the existence of locally self-similar solutions with 
decaying profiles, while also guaranteeing the aforementioned asymptotic characterization of the $L^p$ average of certain non-decaying types of $\Theta$. 

Here, we obtain an extension of the result from \cite{Xue16} to the generalized SQG equation for all $\beta \in (0,2)$. Our main results are split between the cases $\beta \in (0,1]$ and $\beta \in (1,2)$, with each one requiring different conditions on $\Theta$. Naturally, this difference is due to $\bu$ being more singular for $\beta \in (1,2)$ than for $\beta \in (0,1]$. At a more technical level, this is caused by the different arguments required for estimating the component of $\bu$ in \eqref{velocity_field} where the integrand is restricted to the self-similar region, see \cref{lemmamain} and \cref{lemmamain2}. For $\beta \in (0,1]$, this is achieved thanks to the fact that the kernel $K_\beta$ from \eqref{velocity_field} satisfies $\| K_\beta \ast f \|_{L^q(B_\rho(0))} \lesssim \|f\|_{L^q(\R^2)}$ for every $f \in L^q(\R^2)$ and $1 < q < \infty$. In the case $\beta = 1$, this follows from the fact that $K_\beta$ is a Calder\'on-Zygmund operator, whereas for $\beta \in (0,1)$ this is a consequence of Young's convolution inequality together with $K_\beta$ being integrable near the origin. On the other hand, for $\beta \in (1,2)$, $K_\beta$ is neither a Calder\'on-Zygmund operator nor integrable near the origin. To circumvent this issue, we write $K_\beta(x) = \nabla^\perp (|x|^{-\beta})$ and integrate by parts in \eqref{velocity_field}, thus transferring one derivative to $\theta$, and hence to $\Theta$, when restricting $\theta$ to the self-similar region, see \eqref{eq3} and \eqref{eq4} below. For this reason, in comparison to the case $\beta \in (0,1]$, here we impose an additional growth assumption on the $L^r$ norm of $\nabla \Theta$ over increasing regions in the spatial domain, for suitable $r$, see \eqref{profile-hipothesis-derivada}.

Additionally, we note that \cref{thm:main2} partially recovers the aforementioned result established in \cite{CX15} concerning globally self-similar solutions of the gSQG equation for $\beta \in (0,1]$, and with weaker assumptions, since every globally self-similar solution is also locally self-similar. See \cref{rmk:lit:comp} below for the precise details. We also point out that the previously referenced result from \cite{garcia2022self} regarding the existence of a globally (hence locally) self-similar solution for the gSQG when $\beta \in (0,1)$ is not in contradiction with our results on nonexistence of nontrivial locally self-similar solutions, specifically as stated within \cref{thm:main2} and \cref{thm:main4}, \ref{cor:2:i}, below. Indeed, as we mentioned above, \cite{garcia2022self} presents a globally self-similar solution that possesses infinite energy, whereas our results concern solutions within a finite-energy class where local existence is established, namely $\theta \in \mathcal{C}([0, T); H^s(\R^2)) \cap L^\infty(0, T; L^1(\R^2))$, with $s > 1 +\beta$.

The remainder of this manuscript is organized as follows. In \cref{mainresults}, we present the statements of our main results. Their proofs are given in \cref{proof-of-results}. Finally, in \cref{appendix}, we show two crucial lemmas that are used in the proofs of the main results.

\section{Statements of the main results}\label{mainresults}

This section collects the statements of our main results. For the reasons described in the previous section, these are split between the two different cases $1 < \beta < 2$ and $0 < \beta \leq 1$ concerning the parameter $\beta$ in the constitutive relation between $\bu$ and $\theta$ in \eqref{DSQG}. 

Throughout the manuscript, we fix the standard notation of Sobolev spaces $H^s$, $s \in [0,\infty)$, and Lebesgue spaces $L^p$, $p \in [1,\infty]$. We also denote by $C$ a positive constant whose value may change from line to line. Moreover, we write $A \lesssim B$ to denote that $A \leq C B$ for some constant $C > 0$, and $A \sim B$ means that both $A \lesssim B$ and $B \lesssim A$ hold.

\subsection{The case $1< \beta <2$} 

	\begin{theorem}\label{thm:main}
	Fix $\beta \in (1,2)$. Suppose $\theta \in \mathcal{C}([0, T); H^s(\R^2)) \cap L^\infty(0, T; L^1(\R^2))$, with $s > 1 +\beta$, is a solution to the gSQG equation \eqref{DSQG} that is locally self-similar in a ball $B_\rho(x_0) \subset \R^2$, with scaling parameter $\alpha >-1$ and profile $\Theta \in \mathcal{C}^{1} (\R^2)$. Fix also $p \geq 1$, and suppose that for some $r \geq p+1$, $\gamma_1 \in [0,r(\beta - 1))$, and $\gamma_0 \geq 0$ with
		\begin{align}\label{assp:gamma0}
			\gamma_0 \leq \gamma_1 + r \quad \mbox{and} \quad \gamma_0  < \frac{(r-p)r}{p} \left( \beta - 1 - \frac{\gamma_1}{r} \right),
		\end{align}
	it holds
	\begin{equation}\label{profile-hipothesis-function}
		\int_{|y| \leq L} |\Theta (y)|^r dy \lesssim L^{\gamma_0}, 
	\end{equation}
	and
	\begin{equation}\label{profile-hipothesis-derivada}
		\int_{|y| \leq L} |\nabla \Theta (y)|^r dy \lesssim L^{\gamma_1}
	\end{equation}
	for all $L$ sufficiently large. Under these conditions, it follows that if $\alpha > \beta+\frac{2}{p} -1$ or $-1 < \alpha < \beta -1 + \frac{2-\gamma_0}{r}$ then $\Theta \equiv 0$. Moreover, if $\alpha \in \left[\beta -1 + \frac{2-\gamma_0}{r}, \beta -1 + \frac{2}{p} \right]$ then either $\Theta \equiv 0$ or $\Theta$ is a nontrivial profile and it satisfies 
	\begin{align}\label{growth-profile}
		\int_{|y| \leq L} |\Theta(y)|^p dy \sim  L^{2-p(1+\alpha -\beta)}
	\end{align}
	for all $L$ sufficiently large. 
\end{theorem}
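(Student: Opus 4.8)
The plan is to exploit the local self-similar identity \eqref{ss:x0:T} together with the transport equation to derive a scale-invariant integral identity for $\Theta$ along trajectories, and then convert it into upper and lower bounds on $\int_{|y|\le L}|\Theta|^p$. After reducing (via the translation/time-reversal symmetry) to $x_0=0$, I would plug \eqref{ss:x0:T} into the first equation of \eqref{DSQG} to obtain the profile equation $-\frac{1+\alpha-\beta}{1+\alpha}\Theta - \frac{1}{1+\alpha} y\cdot\nabla\Theta + \bU\cdot\nabla\Theta = 0$ in the ball, where $\bU$ is the velocity associated to $\Theta$ through \eqref{velocity_field}. The key analytic input is the splitting $\bU = \bU_{\mathrm{in}} + \bU_{\mathrm{out}}$, where $\bU_{\mathrm{in}}$ is the contribution of the integral restricted to the self-similar ball and $\bU_{\mathrm{out}}$ the remainder. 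For $\bU_{\mathrm{in}}$, since $\beta\in(1,2)$ the kernel $K_\beta$ is not integrable near the origin and is not Calderón–Zygmund, so I would write $K_\beta(x)=\nabla^\perp(|x|^{-\beta})$ and integrate by parts, transferring a derivative onto $\Theta$; this is exactly where hypothesis \eqref{profile-hipothesis-derivada} enters. The far-field piece $\bU_{\mathrm{out}}$ should be controlled using the smoothness of $K_\beta$ away from the origin together with \eqref{profile-hipothesis-function}; I expect here an estimate of the form $\|\bU_{\mathrm{out}}\|_{L^\infty(B_{\rho/2})} \lesssim$ a quantity controlled by the growth exponents. These steps are presumably the content of \cref{lemmamain}, which I will invoke.

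Next I would test the profile equation against $|\Theta|^{p-2}\Theta\,\chi_L$ for a cutoff $\chi_L$ supported in $|y|\le L$ (with $L$ large, but working on the rescaled picture so that the self-similar ball has been blown up to all of $\R^2$ — more precisely, one uses the local identity on balls of every radius via the footnote observation, letting the effective self-similar radius grow). Integrating by parts, the transport term $\bU\cdot\nabla\Theta$ against $|\Theta|^{p-2}\Theta$ produces $\frac1p\diver\bU\,|\Theta|^p$, which vanishes since $\bU$ is divergence-free, leaving only a boundary commutator term involving $\bU\cdot\nabla\chi_L$. The dilation term $y\cdot\nabla\Theta$ against $|\Theta|^{p-2}\Theta$ similarly integrates by parts to $-\frac{2}{p}\int|\Theta|^p\chi_L + \text{boundary}$. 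Collecting, one gets an identity of the schematic form
\begin{align*}
\Big(\tfrac{1+\alpha-\beta}{1+\alpha} - \tfrac{2}{p(1+\alpha)}\Big)\int |\Theta|^p\chi_L = \text{(boundary terms involving } \nabla\chi_L, \bU\text{)},
\end{align*}
i.e. the coefficient $\frac{1}{1+\alpha}\big(1+\alpha-\beta-\frac{2}{p}\big) = \frac{1}{1+\alpha}\big(\alpha - (\beta-1+\frac{2}{p})\big)$ appears. The boundary terms are estimated using the $\bU$ bounds from \cref{lemmamain} together with \eqref{profile-hipothesis-function}: choosing $\nabla\chi_L$ of size $1/L$ supported in an annulus of area $\sim L^2$ and using Hölder with exponents tied to $r$ and $p$ (this forces $r\ge p+1$) yields a bound of the form $C L^{\delta}$ for an explicit $\delta$ depending on $\alpha,\beta,p,r,\gamma_0,\gamma_1$; conditions \eqref{assp:gamma0} are precisely what make the relevant exponents have the right sign.

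From this master inequality the three conclusions follow by a dichotomy on the sign of the coefficient $\alpha - (\beta-1+\frac{2}{p})$ and on the size of the exponent $\delta$. If $\alpha > \beta - 1 + \frac2p$, the coefficient is positive; letting $L\to\infty$, if additionally the boundary exponent is negative (which I expect to follow from \eqref{assp:gamma0} in this regime, possibly after also using the competing bound from the $L^r$-to-$L^p$ interpolation controlled by $\gamma_0$) one forces $\int|\Theta|^p = 0$, hence $\Theta\equiv 0$ by continuity. The case $-1<\alpha<\beta-1+\frac{2-\gamma_0}{r}$ is handled by a parallel argument but testing against the natural $L^r$-type quantity or by directly using the upper bound \eqref{profile-hipothesis-function} to see the boundary term dies faster than any putative lower bound, again yielding $\Theta\equiv0$. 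In the intermediate range $\alpha\in[\beta-1+\frac{2-\gamma_0}{r},\beta-1+\frac2p]$ the same identity gives both a lower bound $\int_{|y|\le L}|\Theta|^p\gtrsim L^{2-p(1+\alpha-\beta)}$ and a matching upper bound $\lesssim L^{2-p(1+\alpha-\beta)}$ — the upper bound from the boundary estimate, the lower bound from the requirement that the coefficient times $\int|\Theta|^p$ be balanced by boundary terms of exactly that order when $\Theta\not\equiv 0$ — giving \eqref{growth-profile}. The main obstacle I anticipate is the careful bookkeeping of exponents in the boundary/commutator estimates for $\bU_{\mathrm{out}}$ and $\bU_{\mathrm{in}}$: one must verify that the inequalities \eqref{assp:gamma0} together with $r\ge p+1$ and $\gamma_1\in[0,r(\beta-1))$ are exactly what is needed to simultaneously close the "$\Theta\equiv0$" direction (boundary term exponent $<0$ or more negative than the leading power) and produce the sharp two-sided bound in the intermediate window — in particular ensuring that the integrated-by-parts term arising from $K_\beta=\nabla^\perp(|x|^{-\beta})$, which carries $\nabla\Theta$, does not dominate. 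Establishing \cref{lemmamain} with the precise dependence on $\gamma_0,\gamma_1,r$ is therefore the technical heart of the argument.
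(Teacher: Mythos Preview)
Your strategy has the right flavor but differs from the paper in important ways, and there is a genuine gap.

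\textbf{Comparison with the paper.} The paper does \emph{not} derive a profile equation for $\Theta$ and test it against $|\Theta|^{p-2}\Theta\chi_L$. Instead it works at the level of $\theta$: it uses the local $L^p$ identity \eqref{eq:loc:Lp} for $\theta$ with a fixed cutoff $\phi_{\rho/4}$, integrates in time between $t_1$ and $t_2$, and only then converts everything to $\Theta$ via the self-similar change of variables. This produces the master inequality \eqref{maininequality2}, in which the ``boundary term'' is an integral over $\{|y|\ge L\}$ (not over an annulus). The reason for this choice is precisely the obstruction you gloss over: because the self-similarity is only local, the velocity in $B_\rho$ is \emph{not} a function of $\Theta$ alone, so there is no autonomous profile equation with a well-defined $\bU$. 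The paper's time-integrated approach handles this cleanly: the outside contributions $\bu^{(2)},\bu^{(3)}$ are bounded uniformly by $\|\theta(0)\|_{L^2}$, and the inside piece is expressed via $\Theta$ through the function $\vt$ of \eqref{vt}, which carries a residual $t$-dependence absorbed by integrating over the set $\Ay$. Also, Case~1 ($\alpha>\beta-1+2/p$) is not handled via your master inequality at all: it follows in one line from the $L^p$ conservation $\|\theta(t)\|_{L^{\tilde p}}=\|\theta(0)\|_{L^{\tilde p}}$ and the self-similar identity, see \eqref{eq1step2}.

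\textbf{The main gap.} Your proposal expects a single application of the master inequality to close each case. It does not. In Case~2, plugging in \eqref{profile-hipothesis-function}--\eqref{profile-hipothesis-derivada} only yields $\int_{|y|\le L}|\Theta|^p\lesssim L^{a_0}$ with $a_0$ as in \eqref{a_0defi}, and $a_0$ need not be negative. The paper then runs a nontrivial \emph{bootstrap}: interpolate between the new $L^p$ bound and the $L^r$ hypothesis to improve the $L^q$ control on $\Theta$ (with $q=rp/(r-1)$), feed this back into the right-hand side of \eqref{maininequality2}, and iterate. One must track two competing improvements (one by $a_1>0$, one by $1+\alpha>0$), and the argument bifurcates according to which dominates; see \eqref{b_0}--\eqref{ineq:bn}. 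The conditions \eqref{assp:gamma0} are exactly what make the limiting exponent $a_0-a_1/(1-\delta_p)$ negative, cf.\ \eqref{a_0 -a_1}. The same bootstrap structure drives Case~3, which is proved by \emph{contradiction}: assuming the lower bound in \eqref{growth-profile} fails along a sequence $L_i\to\infty$ recovers \eqref{maininequality2}, and iterating forces $\Theta\equiv0$. Your ``balancing'' argument for the two-sided bound does not capture this; the upper bound in \eqref{growth-profile} comes trivially from \eqref{eq1step2}, while the lower bound requires the full iteration. Without the bootstrap, your outline does not close.
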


\begin{remark}\label{remark-thm1}
	We note that, when $\gamma_0 = 0$, a slightly stronger result can be obtained in \cref{thm:main}. Namely, under $\gamma_0 = 0$, we have that $\Theta\equiv 0$ also in the case $\alpha =  \beta - 1 + 2/r$. Indeed, in this case, condition \eqref{profile-hipothesis-function} implies $\Theta\in L^r(\mathbb R^2)$ and one can use the same argument as in the beginning of the proof of \cite[Theorem 1.1]{CX15} to prove \eqref{limitL2}. Since this is the only instance in the proof of \cref{case2} where the strict inequality $\alpha < \beta - 1 + 2/r$ was required, the statement follows.	
\end{remark}

By verifying the assumptions of \cref{thm:main}, we may automatically exclude self-similar profiles with certain asymptotic behaviors, or guarantee a characterization of the $L^p$ norm as in \eqref{growth-profile} for possible types of blowup profiles. This is done in the following corollary.

\begin{corollary}\label{thm:main3}
Fix $\beta \in (1,2)$. Suppose $\theta \in \mathcal{C}([0, T); H^s(\R^2)) \cap L^\infty(0, T; L^1(\R^2))$, with $s > 1 +\beta$, is a solution to the gSQG equation \eqref{DSQG} that is locally self-similar in a ball $B_\rho(x_0) \subset \R^2$, with scaling parameter $\alpha >-1$ and profile $\Theta \in \mathcal{C}^{1} (\R^2)$. 
Then, the following statements hold:
\begin{enumerate}[label={(\roman*)}]
    \item\label{cor:1:i} If there exist some $\sigma_0 >0$ and $\sigma_1>0$ such that $|\Theta (y)| \lesssim |y|^{-\sigma_0}$ and $|\nabla \Theta (y)| \lesssim |y|^{-\sigma_1}$ for all $|y| \gg 1$, then $\Theta \equiv 0$ in $\R^2$.
	\item\label{cor:1:ii} Suppose that $|\Theta (y)| \gtrsim 1 $ for all $|y| \gg 1$, and that there exists a real number $0 \leq \sigma_1 < \beta -1$ such that 
$ |\nabla \Theta (y)| \lesssim |y|^{\sigma_1}$ for all $|y| \gg 1$. Then the values of $\alpha$
admitting nontrivial profiles belong to the interval $[\beta -2 -\sigma_1, \beta -1]$ and for each such $\alpha$ the corresponding profile $\Theta$ satisfies 
\begin{align*}
\int_{|y| \leq L} |\Theta(y)|^p dy \sim   L^{2-p(1+\alpha -\beta)}
\end{align*}
for every $p \in [1,\infty)$ and for all $L$ sufficiently large.
\end{enumerate}
\end{corollary}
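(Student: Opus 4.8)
The plan is to deduce both items directly from \cref{thm:main}. The main step is to turn the pointwise hypotheses on $\Theta$ and $\nabla\Theta$ into the integral bounds \eqref{profile-hipothesis-function}--\eqref{profile-hipothesis-derivada} with explicit exponents $\gamma_0,\gamma_1$, and then to choose the free parameters $p,r$ so that the restrictions \eqref{assp:gamma0} are met. Since $\Theta\in\mathcal{C}^1(\R^2)$ is bounded on every compact set, the part of $\int_{|y|\le L}(\cdot)\,dy$ coming from a fixed neighborhood of the origin stays bounded independently of $L$, so in each case I would read the relevant exponent off the behavior of $\Theta$ (or $\nabla\Theta$) at infinity.

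For \ref{cor:1:i}: since $\sigma_0,\sigma_1>0$, choosing $r$ large enough that $r\sigma_0>2$ and $r\sigma_1>2$ forces $\Theta,\nabla\Theta\in L^r(\R^2)$, so \eqref{profile-hipothesis-function}--\eqref{profile-hipothesis-derivada} hold with $\gamma_0=\gamma_1=0$; for any $p\ge1$ with $r\ge p+1$ the conditions \eqref{assp:gamma0} then reduce to $0<\frac{(r-p)r}{p}(\beta-1)$, which holds because $\beta>1$ and $r>p$. Now fix $\alpha>-1$. If $\alpha\le\beta-1$ then $-1<\alpha<\beta-1+2/r$, so \cref{thm:main} with $p=1$ gives $\Theta\equiv0$. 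If $\alpha>\beta-1$, I would first choose $p$ with $2/p<\alpha-(\beta-1)$ and then $r\ge p+1$ as above; since now $\alpha>\beta-1+2/p$, \cref{thm:main} again yields $\Theta\equiv0$. (Note that the intermediate range of \cref{thm:main} is never entered in this argument.)

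For \ref{cor:1:ii}: integrating $|\nabla\Theta(y)|\lesssim|y|^{\sigma_1}$ along the segment from $0$ to $y$ gives $|\Theta(y)|\lesssim 1+|y|^{1+\sigma_1}$, so I would take $\gamma_0=2+(1+\sigma_1)r$ in \eqref{profile-hipothesis-function} and $\gamma_1=2+\sigma_1 r$ in \eqref{profile-hipothesis-derivada}. With these values the first inequality in \eqref{assp:gamma0} becomes an equality, while $\gamma_1<r(\beta-1)$ and the second inequality in \eqref{assp:gamma0} both hold once $r$ is large relative to $p$; this is exactly where the assumption $\sigma_1<\beta-1$ is used, since it keeps $\beta-1-\gamma_1/r=\beta-1-\sigma_1-2/r$ positive for $r$ large. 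Fixing such a choice: for a nontrivial $\Theta$, \cref{thm:main} excludes $\alpha>\beta-1+2/p$ and $-1<\alpha<\beta-1+(2-\gamma_0)/r=\beta-2-\sigma_1$, and letting $p\to\infty$ (with $r$ chosen uniformly in $p$, e.g. $r\ge Kp$) gives $\alpha\le\beta-1$; hence, using $\alpha>-1$, $\alpha\in[\beta-2-\sigma_1,\beta-1]$. For the $L^p$ asymptotics I would note that for every $p\in[1,\infty)$, choosing $r$ as above, such an $\alpha$ belongs to $[\beta-1+(2-\gamma_0)/r,\,\beta-1+2/p]=[\beta-2-\sigma_1,\,\beta-1+2/p]$, so the second conclusion of \cref{thm:main} applies; since $\Theta\not\equiv0$, it gives $\int_{|y|\le L}|\Theta(y)|^p\,dy\sim L^{2-p(1+\alpha-\beta)}$ for all large $L$.

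The step I expect to require the most care is the parameter bookkeeping in \ref{cor:1:ii}, namely verifying that the forced exponents $\gamma_0=2+(1+\sigma_1)r$ and $\gamma_1=2+\sigma_1 r$ are compatible with \eqref{assp:gamma0} for a suitable $r$ (and that this can be done with arbitrarily large $p$). The right-hand side of the second inequality in \eqref{assp:gamma0} grows like $\frac{(\beta-1-\sigma_1)r^2}{p}$ as $r\to\infty$, whereas $\gamma_0$ grows only linearly in $r$, so it should suffice to take $r\ge Kp$ with $K=K(\beta,\sigma_1)$ a sufficiently large absolute constant — which simultaneously yields $r\ge p+1$ and $\gamma_1<r(\beta-1)$. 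Everything else should be routine.
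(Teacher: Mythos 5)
Your proposal is correct, and it follows the same overall strategy as the paper: convert the pointwise decay/growth hypotheses into the integral bounds \eqref{profile-hipothesis-function}--\eqref{profile-hipothesis-derivada} (with $\gamma_0=\gamma_1=0$ in \ref{cor:1:i}, and $\gamma_1=\sigma_1 r+2$, $\gamma_0=\gamma_1+r$ in \ref{cor:1:ii}), verify \eqref{assp:gamma0} for $r$ large relative to $p$, and invoke \cref{thm:main}. Two steps differ in execution, both validly. In \ref{cor:1:i} the paper applies \cref{thm:main} with two choices $(p_1,p_1+1)$ and $(p_1+k,p_1+k+1)$, $k\geq 2$, and concludes because the resulting intervals $\left[\beta-1+\tfrac{2}{p+1},\beta-1+\tfrac{2}{p}\right]$ of admissible $\alpha$ are disjoint; you instead split on whether $\alpha\leq\beta-1$ or $\alpha>\beta-1$ and land directly in one of the two vanishing regimes of \cref{thm:main} --- equivalent in substance. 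In \ref{cor:1:ii} the paper obtains $\alpha\leq\beta-1$ from the hypothesis $|\Theta|\gtrsim 1$, which forces $\int_{|y|\leq L}|\Theta|^p\,dy\gtrsim L^2$ and is then played against the upper bound $L^{2-p(1+\alpha-\beta)}$; you obtain it by letting $p\to\infty$ in the constraint $\alpha\leq\beta-1+2/p$, which is also correct (and, as a side effect, shows this particular conclusion does not require the lower bound on $\Theta$). Your derivation of $|\Theta(y)|\lesssim 1+|y|^{1+\sigma_1}$ by integrating $\nabla\Theta$ along a segment is a clean, elementary substitute for the paper's appeal to Sobolev embedding, and your bookkeeping for the choice $r\geq Kp$ with $K=K(\beta,\sigma_1)$ checks out.
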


\subsection{The case $0 < \beta \leq 1$}\label{0<beta<1}

The analogous versions of \cref{thm:main} and \cref{thm:main3} for the case $0 < \beta \leq 1$ are presented next. 

\begin{theorem}\label{thm:main2}
	Fix $\beta \in (0,1]$. Suppose $\theta \in \mathcal{C}([0, T); H^s(\R^2)) \cap L^\infty([0, T); L^1(\R^2))$, with $s > 1 + \beta$, is a solution to the gSQG equation \eqref{DSQG} that is locally self-similar in a ball $B_\rho(x_0) \subset \R^2$, with scaling parameter $\alpha > -1$ and profile $\Theta \in \mathcal{C}^\beta (\R^2)$. Fix also $p \geq 1$, and suppose that for some $r \geq p +1$ and $ \gamma \in [0,  \beta (r - p))$, it holds
	\begin{equation}\label{hipothesys-profile-thm2}
		\int_{|y| \leq L} |\Theta (y)|^r dy \lesssim L^\gamma
	\end{equation}
	for all $L$ sufficiently large. Under these conditions, it follows that if $\alpha > \beta -1 + \frac{2}{p}$ or $-1 < \alpha < \beta -1 + \frac{2-\gamma}{r}$ then $\Theta \equiv 0$. Moreover, if $\alpha \in \left[\beta -1 + \frac{2-\gamma}{r} , \beta -1 + \frac{2}{p} \right]$ then either $\Theta \equiv 0$ or $\Theta$ is a nontrivial profile and it satisfies 
	\begin{align}\label{growth-profile-thm2}
		\int_{|y| \leq L} |\Theta(y)|^p dy \sim L^{2-p(1+\alpha -\beta)}
	\end{align}
	for all $L$ sufficiently large.
\end{theorem}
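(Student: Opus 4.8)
The plan is to follow the same scheme as in the case $\beta \in (1,2)$ (Theorem \ref{thm:main}) but now exploiting the better behaviour of the kernel $K_\beta$ when $\beta \in (0,1]$, so that no integration by parts and hence no growth assumption on $\nabla \Theta$ is needed. First I would substitute the locally self-similar ansatz \eqref{ss:x0:T} (after translating $x_0$ to the origin and reversing time) into the transport equation \eqref{DSQG} to obtain a stationary profile equation of the form $(1+\alpha-\beta)\Theta + (1+\alpha)^{-1}\der{y}{\Theta} + \der{\bU}{\Theta} = 0$ in the rescaled ball, where $\bU$ is the velocity generated by $\Theta$ through the Biot--Savart law \eqref{velocity_field}--\eqref{kernel}. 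The key quantity to control is the $L^p$ average $I(L) := \int_{|y|\le L}|\Theta(y)|^p\,dy$ over large balls, and the core of the argument is to derive a differential (or difference) inequality for $I(L)$ by testing the profile equation against $|\Theta|^{p-2}\Theta$ (suitably truncated) and integrating over $B_L(0)$, tracking the scaling of every term in powers of $L$.

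Next I would split the velocity $\bU = K_\beta \ast \Theta$ into the near piece $\bU_{\mathrm{in}}$, where the integration variable stays in the self-similar region (a ball of radius comparable to $(T-t)^{-1/(1+\alpha)}\rho$ after rescaling), and the far piece $\bU_{\mathrm{out}}$. For $\bU_{\mathrm{in}}$ I would invoke exactly the mapping property quoted in the introduction, namely $\|K_\beta \ast f\|_{L^q(B_\rho(0))}\lesssim \|f\|_{L^q(\R^2)}$ for $1<q<\infty$ — this holds because $K_\beta$ is Calder\'on--Zygmund for $\beta=1$ and because $K_\beta$ is locally integrable and Young's inequality applies for $\beta\in(0,1)$ — to bound $\bU_{\mathrm{in}}$ in terms of $\Theta$ over the self-similar ball, whose size is then controlled by hypothesis \eqref{hipothesys-profile-thm2}. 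For $\bU_{\mathrm{out}}$ I would use the pointwise decay of $K_\beta$ away from the origin together with \eqref{hipothesys-profile-thm2} and H\"older to get a pointwise bound on $\bU_{\mathrm{out}}(y)$ growing at most like a controlled power of $|y|$. Feeding these estimates back into the differential inequality for $I(L)$, together with the scaling relation between $I(L)$ on physical and rescaled balls dictated by \eqref{ss:x0:T}, yields an inequality of the schematic form $I(L)\lesssim L^{a}I(L)^{\theta} + L^{b}$ with exponents $a,b$ explicit in $\alpha,\beta,p,r,\gamma$; balancing these exponents against the ``trivial'' lower bound one expects from a genuinely self-similar profile produces the trichotomy in the statement. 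Concretely, when the exponent on the right forces $I(L)\to 0$ as $L\to\infty$ one concludes $\Theta\equiv 0$; this happens precisely for $\alpha > \beta-1+2/p$ (the far field kills the profile) and for $-1<\alpha<\beta-1+(2-\gamma)/r$ (the near field, controlled by \eqref{hipothesys-profile-thm2}, does); and in the intermediate closed interval the same inequality, run in both directions, pins down $I(L)\sim L^{2-p(1+\alpha-\beta)}$, which is \eqref{growth-profile-thm2}.

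The main obstacle I anticipate is the bookkeeping in the intermediate range: one must show that the matching lower bound $I(L)\gtrsim L^{2-p(1+\alpha-\beta)}$ for a nontrivial profile is not destroyed by the error terms, i.e. that once $\Theta\not\equiv 0$ the average cannot decay faster than the self-similar scaling allows. This requires a careful iteration/bootstrap on the differential inequality rather than a one-shot estimate, and one has to be attentive to the borderline cases $p=1$, $\beta=1$, and $\gamma$ at the endpoint of $[0,\beta(r-p))$, as well as to the use of the hypothesis $r\ge p+1$ (which is exactly what makes the H\"older exponent in the far-field estimate admissible and the exponent $\gamma_0$-type condition in \eqref{assp:gamma0} collapse to the single condition $\gamma<\beta(r-p)$ here, since $\nabla\Theta$ no longer appears). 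A secondary technical point is justifying the integrations by parts and the principal-value manipulations rigorously for $\Theta\in\mathcal C^\beta(\R^2)$ only; this should follow from the local well-posedness class $\theta\in\mathcal C([0,T);H^s)$ with $s>1+\beta$ and a cutoff/approximation argument, and is presumably handled by the two ``crucial lemmas'' referenced as \cref{lemmamain2} in the appendix.
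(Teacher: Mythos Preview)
Your overall architecture --- split the velocity into a piece coming from the self-similar region and a complementary piece, use the $L^q$ mapping property of $K_\beta$ for $\beta\in(0,1]$ on the first, bootstrap on a resulting inequality for $I(L)=\int_{|y|\le L}|\Theta|^p\,dy$, and treat the three ranges of $\alpha$ separately --- matches the paper's scheme. The gap is in how you set up the profile equation and, crucially, how you control the ``far'' piece of the velocity. For a \emph{locally} self-similar solution there is no stationary profile equation of the form you wrote: the physical velocity $\bu = K_\beta * \theta$ depends on $\theta$ on all of $\R^2$, and outside $B_\rho(x_0)$ the solution is \emph{not} given by $\Theta$. After rescaling, the velocity acting on $\Theta$ is therefore genuinely time-dependent (through the cutoff $\phi_\rho(\cdot\,(T-t)^{1/(1+\alpha)})$ on the inner piece, and through the unknown exterior data on the outer piece), so you cannot write $\bU = K_\beta * \Theta$ and then split that convolution. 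In particular, your plan to bound $\bU_{\mathrm{out}}$ via \eqref{hipothesys-profile-thm2} cannot work: the outer contribution involves $\theta$ in $B_\rho(x_0)^c$, about which the profile hypothesis says nothing.

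The paper handles this by never writing a profile equation. It starts from the local $L^p$ identity \eqref{eq:loc:Lp} for $\theta$ itself, tested against a fixed spatial cutoff $\phi_{\rho/4}$, and only then passes to self-similar variables term by term. The outer velocity piece $\tbu^{(2)}$ (integration over $|y|\ge \rho/2$) is bounded pointwise by $C\|\theta(0)\|_{L^2}$ using the $L^2$ conservation \eqref{eq:max:princ} and the decay of $K_\beta$ away from the origin --- this is where the hypothesis $\theta\in L^\infty(0,T;L^1)\cap \cC([0,T);H^s)$ enters, not \eqref{hipothesys-profile-thm2}. The inner piece, after rescaling, becomes the time-dependent quantity $U^{(1)}(y,t)$; the time integral over the set $A_y$ in \eqref{ball-defi} is what produces the auxiliary function $\tU(y)$, whose $L^r$ norm on dyadic annuli is estimated in \cref{lemmamain2} (this is the ``crucial lemma'' you allude to, and it is where the $K_\beta$ mapping property is actually used). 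The resulting inequality is \eqref{maininequality2-thm2}, and the bootstrap proceeds via interpolation between $L^p$ and $L^r$, cf.\ \eqref{interpolation-thm2}--\eqref{b_0-them2}, rather than a single step of the schematic form $I(L)\lesssim L^a I(L)^\theta + L^b$. Your sketch would become correct once you replace the fictitious stationary equation by the local $L^p$ identity in physical variables and recognise that the exterior velocity is controlled by conservation of $\|\theta(t)\|_{L^2}$, not by the growth hypothesis on $\Theta$.
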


\begin{remark}
	When \eqref{hipothesys-profile-thm2} holds with $\gamma = 0$, then we have that $\Theta\equiv 0$ also in the case $ \alpha=  \beta - 1 + 2/r$. The justification is the same as in \cref{remark-thm1}. 
\end{remark}

\begin{corollary}\label{thm:main4}
Fix $\beta \in (0,1]$. Suppose $\theta \in \mathcal{C}([0, T); H^s(\R^2)) \cap L^\infty(0, T; L^1(\R^2))$, with $s > 1 +\beta$, is a locally self-similar solution to the gSQG equation that is locally self-similar in a ball $B_\rho(x_0) \subset \R^2$, with scaling parameter $\alpha >-1$ and profile $\Theta \in \mathcal{C}^{\beta} (\R^2)$.
Then, the following statements hold:
\begin{enumerate}[label={(\roman*)}]
    \item\label{cor:2:i} If there exists some $\sigma > 0$ such that $|\Theta (y)| \lesssim |y|^{-\sigma}$ for all $|y| \gg 1$, then no locally self-similar blowup occurs, i.e., $\Theta \equiv 0$ in $\R^2$.
	\item\label{cor:2:ii} If there exists some $\sigma \in (0,\beta) $ such that $1 \lesssim |\Theta (y)| \lesssim |y|^{\sigma}$  for all $|y| \gg 1$, then the values of $\alpha$
admitting nontrivial profiles belong to $[\beta -1 - \sigma,\beta -1] $, and for each such $\alpha$ the corresponding profile $\Theta$ satisfies 
\begin{eqnarray*}
\int_{|y| \leq L} |\Theta(y)|^p dy \sim  L^{2-p(1+\alpha -\beta)},
\end{eqnarray*}
for every $p \in [1,\infty)$ and for all $L$ sufficiently large. 

\end{enumerate}
\end{corollary}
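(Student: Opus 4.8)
The plan is to deduce both parts of the corollary directly from \cref{thm:main2}, in the same spirit as one would derive a statement like \cref{thm:main3} in the range $\beta\in(1,2)$: one verifies the integral hypothesis \eqref{hipothesys-profile-thm2} for a judicious choice of the exponents $p,r$ and the growth rate $\gamma$, and then reads off the conclusion it provides — either $\Theta\equiv0$, or the explicit $L^p$ growth law \eqref{growth-profile-thm2}. Fix a ball $B_{R_0}(0)$ outside of which the pointwise bounds on $\Theta$ in the hypotheses hold; since $\Theta\in\mathcal{C}^\beta(\R^2)$ is continuous, hence bounded on $\overline{B_{R_0}(0)}$, the quantity $\int_{|y|\leq R_0}|\Theta(y)|^r\,dy$ is a finite constant, so the behaviour of $\int_{|y|\leq L}|\Theta|^r\,dy$ as $L\to\infty$ is governed entirely by the pointwise bound for $|y|\geq R_0$, evaluated in polar coordinates.

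For \ref{cor:2:i}, assume $|\Theta(y)|\lesssim|y|^{-\sigma}$ for $|y|\gg1$. Taking $r>2/\sigma$ gives, for all large $L$,
\[
\int_{|y|\leq L}|\Theta(y)|^r\,dy\ \lesssim\ 1+\int_{R_0}^{L}\rho^{1-\sigma r}\,d\rho\ \lesssim\ 1,
\]
so \eqref{hipothesys-profile-thm2} holds with $\gamma=0$. The scaling parameter $\alpha>-1$ is part of the data, and I would split into two cases. If $\alpha\leq\beta-1$, take $p=1$ and any $r>\max\{2,2/\sigma\}$; then $\alpha\leq\beta-1<\beta-1+2/r=\beta-1+(2-\gamma)/r$, and \cref{thm:main2} forces $\Theta\equiv0$. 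If $\alpha>\beta-1$, choose $p\geq1$ so large that $2/p<\alpha-\beta+1$ and then $r>\max\{p+1,2/\sigma\}$; now $\alpha>\beta-1+2/p$, and \cref{thm:main2} again gives $\Theta\equiv0$. In both cases $\gamma=0\in[0,\beta(r-p))$ because $r>p$, so all hypotheses of the theorem are met and \ref{cor:2:i} follows.

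For \ref{cor:2:ii}, assume $1\lesssim|\Theta(y)|\lesssim|y|^{\sigma}$ for $|y|\gg1$, with $\sigma\in(0,\beta)$. The upper bound gives, in the same way,
\[
\int_{|y|\leq L}|\Theta(y)|^r\,dy\ \lesssim\ 1+\int_{R_0}^{L}\rho^{1+\sigma r}\,d\rho\ \lesssim\ L^{\sigma r+2}
\]
for all large $L$, so \eqref{hipothesys-profile-thm2} holds with $\gamma=\sigma r+2$. For a fixed $p\geq1$, the admissibility requirement $\gamma<\beta(r-p)$ amounts to $(\beta-\sigma)r>2+\beta p$, which holds once $r$ is large enough — here is where $\sigma<\beta$ is used — and I would also impose $r\geq p+1$. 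With this $\gamma$ one computes $\beta-1+(2-\gamma)/r=\beta-1-\sigma$, so \cref{thm:main2} yields $\Theta\equiv0$ whenever $\alpha<\beta-1-\sigma$ or $\alpha>\beta-1+2/p$, while for $\alpha\in[\beta-1-\sigma,\,\beta-1+2/p]$ either $\Theta\equiv0$ or \eqref{growth-profile-thm2} holds. To narrow the right endpoint from $\beta-1+2/p$ down to $\beta-1$, I would use the lower bound: $|\Theta(y)|\gtrsim1$ for $|y|\gg1$ forces $\int_{|y|\leq L}|\Theta(y)|^p\,dy\gtrsim L^2$ for large $L$, so if \eqref{growth-profile-thm2} held with a nontrivial profile for some $\alpha>\beta-1$ we would obtain $L^2\lesssim L^{2-p(1+\alpha-\beta)}$ for all large $L$, which is impossible because the exponent $2-p(1+\alpha-\beta)$ is then strictly less than $2$. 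Consequently a nontrivial profile is possible only for $\alpha\in[\beta-1-\sigma,\,\beta-1]$, and for each such $\alpha$ it satisfies \eqref{growth-profile-thm2}. Running this argument for each fixed $p\in[1,\infty)$ with its own $r=r(p)$, and noting that the interval $[\beta-1-\sigma,\,\beta-1]$ does not depend on $p$, yields the claimed $L^p$ asymptotics for every $p\in[1,\infty)$.

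Most of the above is routine; the two steps that carry all the weight are the verifications where the hypotheses are actually used. First, one must confirm that the natural choice of $\gamma$ lands in the admissibility window $[0,\beta(r-p))$ of \cref{thm:main2}; in part \ref{cor:2:ii} this is what forces $r$ to be taken large and is precisely where the assumption $\sigma<\beta$ is consumed. Second, in \ref{cor:2:ii} the theorem only produces the a priori interval $[\beta-1-\sigma,\,\beta-1+2/p]$, and shrinking it to $[\beta-1-\sigma,\,\beta-1]$ requires playing the growth law \eqref{growth-profile-thm2} against the lower bound $|\Theta|\gtrsim1$ — the one place in the proof that is not a direct invocation of \cref{thm:main2}.
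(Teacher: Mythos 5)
Your proposal is correct and follows essentially the same route as the paper: both parts are obtained by verifying hypothesis \eqref{hipothesys-profile-thm2} of \cref{thm:main2} for suitable $p$, $r$, $\gamma$ and reading off its conclusion, and your part \ref{cor:2:ii} (including the choice $\gamma=\sigma r+2$, the constraint $r>(2+\beta p)/(\beta-\sigma)$, the computation $\beta-1+(2-\gamma)/r=\beta-1-\sigma$, and the use of $|\Theta|\gtrsim 1$ to shrink the right endpoint to $\beta-1$) matches the paper's argument step for step. The only divergence is in part \ref{cor:2:i}: the paper fixes $p_1=\max\{1,2/\sigma\}$ and observes that the admissible intervals $[\beta-1+\tfrac{2}{p_1+k+1},\,\beta-1+\tfrac{2}{p_1+k}]$ obtained for different $k$ have empty intersection, whereas you split on whether $\alpha\leq\beta-1$ or $\alpha>\beta-1$ and choose $p$ (and $r$) so that $\alpha$ lands directly in one of the two regimes where \cref{thm:main2} forces $\Theta\equiv0$; both are valid instances of the same reduction, and yours is arguably slightly more direct.
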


\begin{remark}\label{rmk:lit:comp}
As mentioned in \cref{sec:intro}, \cref{thm:main2} generalizes the results proved in \cite{Xue16} regarding the SQG equation ($\beta =1$) to the generalized SQG equation for all $\beta \in (0,1]$. Furthermore, our assumptions on the parameters $r$ and $\gamma$ for \eqref{hipothesys-profile-thm2} to hold, namely $r > p$ and $\gamma \in [0, r +2)$, are weaker than those in \cite[Theorem 1.1]{Xue16}, where it is assumed that $r \geq p+1$ and $\gamma \in [0, r-p)$.

Moreover, for $\beta \in (0,1]$, \cref{thm:main2} partially recovers the result established in \cite{CX15} concerning globally self-similar solutions of the gSQG equation, with weaker assumptions, since any globally self-similar solution is also locally self-similar. More precisely, we recover the particular case of \cite[Theorem 1.1]{CX15} when $p+1<2/(\gamma-1)$ and $p+1\leq q<2/(\gamma-1)$ (here $p, q$ and $\gamma$ are as in \cite{CX15}, which correspond to $p$, $r$, and $2-\beta$ in our notation, respectively).
\end{remark}

\section{Proofs}\label{proof-of-results}

Before turning to the proofs of our main results, let us provide a brief summary of the underlying ideas. Similarly as in \cite{CX15,Xue16}, our arguments rely crucially on the following local $L^p$ equality satisfied by any solution $\theta \in \mathcal{C}([0,T); H^{s}(\R^2)) \cap L^\infty(0, T; L^p(\R^2))$, with $s > 1+\beta$, of \eqref{DSQG}. Namely, for fixed $0 <t_1< t_2 < T$ and $p \in [1, \infty)$,
\begin{multline}\label{eq:loc:Lp}
	\int_{\R^2} |\theta(x,t_2)|^p \eta (x,t_2) dx - \int_{\R^2} |\theta(x,t_1)|^p \eta (x,t_1) dx \\
	= \int_{t_1}^{t_2}\int_{\R^2} |\theta(x,t)|^p \partial_t\eta (x,t) dx dt + \int_{t_1}^{t_2}\int_{\R^2} |\theta(x,t)|^p (\bu \cdot \nabla) \eta (x,t) dx dt,
\end{multline}
for every smooth and compactly supported test function $\eta$ on $[0, \infty) \times \R^2$, i.e. $\eta \in \mathcal{C}^{\infty}_{c}([0, \infty) \times \R^2, \R)$. Its proof follows by taking a mollification $\theta_\varepsilon = \rho_\varepsilon \ast \theta$ of $\theta$, for a standard mollifier $\rho_\varepsilon$, and noting that $\theta_\varepsilon$ satisfies the equation
\begin{align}\label{eq:theta:ve}
	\partial_t \theta_\varepsilon + (\bu_\varepsilon \cdot \nabla ) \theta_\varepsilon = (\bu_\varepsilon \cdot \nabla ) \theta_\varepsilon - \rho_\varepsilon \ast [(\bu \cdot \nabla )\theta].
\end{align}
Then, multiplying \eqref{eq:theta:ve} by $\theta_\varepsilon |\theta_\varepsilon|^{p-2}\ \psi_\varepsilon(x,t)$ for some suitable test function $\psi_\varepsilon$ such that $\supp \psi_\varepsilon \subseteq \supp \theta_\varepsilon$, and carefully taking the limit as $\varepsilon \to 0$ leads to \eqref{eq:loc:Lp}.

The proof of \cref{thm:main} is divided into three cases, each one corresponding to one of the ranges of $\alpha$ described in the statement. In the first two cases, corresponding to $\alpha$ sufficiently large or sufficiently small, the goal consists in showing that 
\begin{equation}\label{Lp:Theta:sig}
	\int_{|y| \lesssim L} |\Theta (y) |^p dy \lesssim L^{\sigma} \quad \mbox{for some } \sigma<0.
\end{equation}
Clearly, taking the limit as $L \to \infty$ then implies that $\Theta \equiv 0$. When $\alpha$ is sufficiently large, \eqref{Lp:Theta:sig} follows directly from the local self-similarity condition, \eqref{ss:x0:T}, combined with the maximum principle satisfied by the solution $\theta$. On the other hand, with $\alpha$ small enough, an inequality as in \eqref{Lp:Theta:sig} is achieved by establishing a fundamental local $L^p$ inequality from the local equality \eqref{eq:loc:Lp}, see \eqref{maininequality2} below. This is derived by suitably employing cut-off functions to split the velocity field $\bu$ into its restrictions to the self-similar region and the corresponding exterior. With the help of assumptions \eqref{profile-hipothesis-function}, \eqref{profile-hipothesis-derivada}, and \cref{lemmamain}, we then estimate the terms on the right-hand side of \eqref{maininequality2} to yield an upper bound for $\int_{|y| \lesssim L} |\Theta (y) |^p dy$. Next, we redo the estimates by using this new upper bound and bootstrap on this argument until we eventually arrive at an upper bound as in 
\eqref{Lp:Theta:sig} with a negative power of $L$.

For the last and intermediate range of $\alpha$, we must show that every nontrivial profile $\Theta$ satisfies the lower and upper bounds implied by \eqref{growth-profile}. The upper bound is guaranteed from the estimate derived for the first range of $\alpha$, whereas for the lower bound we proceed by contradiction. Namely, assuming that such lower estimate does not hold, it follows that $\Theta$ must satisfy the same local $L^p$ inequality as in the second case, \eqref{maininequality2}. Proceeding with a similar analysis from this case, we then arrive at the contradiction that $\Theta \equiv 0$.

The proof of \cref{thm:main3} follows by choosing appropriate parameters $p,r,\gamma_0$, and $\gamma_1$ so that the assumptions of \cref{thm:main} are verified under the conditions imposed on the profile $\Theta$ in each of the items \ref{cor:1:i} and \ref{cor:1:ii}. Finally, the proofs of \cref{thm:main2} and \cref{thm:main4} are obtained under the same line of reasoning as in the previous two results. The central difference lies on the use of \cref{lemmamain2} below instead of \cref{lemmamain}.

\subsection{Proof of \cref{thm:main}}

Without loss of generality, we may assume $x_0=0$. The proof is divided into three different cases, each corresponding to a particular range for $\alpha$ within the interval $(-1, \infty)$.
\vspace{0.2cm}
\noindent
\begin{case}\label{case1} Suppose $\alpha > \beta+\frac{2}{p} -1$. In this case, we show that $\Theta \equiv 0$ in $\R^2$. 
	
	Fix $t \in [0,T)$ and denote $L = \rho (T-t)^\frac{-1}{1+\alpha}$. Invoking the local self-similarity of $\theta$, namely \eqref{ss:x0:T}, and changing variables, it follows that
	\begin{align}\label{eq-initial}
		\int_{|x| \leq \rho} |\theta (x,t)|^p dx 
		= \frac{1}{(T-t)^{\frac{p(1+\alpha -\beta)}{1+\alpha}}} \int_{|x| \leq \rho} \left| \Theta \left(\frac{x}{(T-t)^{\frac{1}{1+\alpha}}} \right) \right|^p dx
		= C L^{p(1+\alpha -\beta) -2} \int_{|y| \leq L} |\Theta(y)|^p dy.
	\end{align}
	
	Since $s >1$, it follows by Sobolev embedding that $\theta(0) \in H^s(\R^2) \subset L^{\tp}(\R^2)$ for every $\tp \geq 2$. This implies that
	\begin{align}\label{eq:max:princ}
		\|\theta(t)\|_{L^{\tp}} =\|\theta(0)\|_{L^{\tp}} \quad \mbox{for all }  t \in [0,T) \, \mbox{ and } \, \tp \geq 2,
	\end{align}
	see e.g. \cite[Theorem 3.3]{Resnick05}. Thus, by H\"{o}lder's inequality, it follows that for all $p \in [1, \infty)$ and $\tp \geq \max\{2,p\}$, we have
	\begin{align*}
		\int_{|x| \leq \rho} |\theta (x,t)|^p dx \leq C \| \theta (0)\|_{L^{\tp}} \quad \mbox{ for all } t \in [0,T). 
	\end{align*}
	Hence, we obtain from \eqref{eq-initial} that
	\begin{align}\label{eq1step2}
		\int_{|y| \leq L} |\Theta(y)|^p dy \leq CL^{2- p(1+\alpha -\beta)}.
	\end{align}
	Since $2 - p(1+\alpha -\beta) < 0$, taking the limit as $t \rightarrow T$ in \eqref{eq1step2}, which implies $L \to \infty$, we deduce that $\Theta \equiv 0$ in $\R^2$.	
\end{case}	

\begin{case}\label{case2} Let us now suppose that $-1 < \alpha < \beta -1 + \frac{2-\gamma_0}{r}$. Here we once again show that $\Theta \equiv 0$ in $\R^2$.
	
	Take cut-off functions $\phi_{\frac{\rho}{4}}, \phi_\rho \in \cC^\infty(\R^2)$ with $0 \leq \phi_{\frac{\rho}{4}}, \phi_\rho \leq 1$, $\phi_{\frac{\rho}{4}} \equiv 1$ in $B_{\rho/8}(0)$, $\phi_{\frac{\rho}{4}} \equiv 0$ in $B_{\rho/4}^c(0)$, and $\phi_\rho \equiv 1$ in $B_{\rho/2}(0)$, $\phi_\rho \equiv 0$ in $B_\rho^c(0)$. Fix $t_1, t_2 \in [0, T)$. From \eqref{eq:loc:Lp}, we have in particular that	
	\begin{align}\label{eq2}
		\int_{\R^2} |\theta(x,t_2)|^p \phi_{\frac{\rho}{4}}(x) dx - \int_{\R^2} |\theta(x,t_1)|^p \phi_{\frac{\rho}{4}}(x) dx  =\int_{t_1}^{t_2} \int_{\R^2} (\bu(x,t) \cdot \nabla \phi_{\frac{\rho}{4}}(x) )  |\theta(x,t)|^p  dx \,dt.
	\end{align}
	We proceed to analyze each term in \eqref{eq2}, starting with the first two terms on the left-hand side. By the local self-similarity of $\theta$, \eqref{ss:x0:T}, it follows that for $i = 1,2$
	\begin{align}\label{eq_selfsimilar}
		\int_{\R^2} |\theta(x, t_i)|^p \phi_{\frac{\rho}{4}} (x) dx 
		&= \frac{1}{(T-t_i)^{\frac{p(1+\alpha -\beta)}{1+\alpha}}} \int_{\R^2}   \bigg| \Theta \bigg(\frac{x}{(T-t_i)^{\frac{1}{1+\alpha}}}\bigg) \bigg|^p \phi_{\frac{\rho}{4}} (x) dx \notag \\
		&= \frac{1}{(T-t_i)^{\frac{p(1+\alpha -\beta)-2}{1+\alpha}}} \int_{\R^2} |\Theta (y) |^p \phi_{\frac{\rho}{4}} (y(T-t_i)^{\frac{1}{1+\alpha}}) dy \notag \\
		&= l_i^{p(1 + \alpha - \beta) -2} \int_{|y| \leq \frac{\rho}{4}l_i} |\Theta (y)|^p \phi_{\frac{\rho}{4}} (y l_{i}^{-1}) dy,
	\end{align}
	where $l_i = (T-t_i)^{- \frac{1}{1+\alpha}}$, $i =1, 2$. 
	
	To analyze the term in the right-hand side of \eqref{eq2}, we first decompose the velocity field $\bu$ into a term involving the self-similarity region and another one outside of it. More precisely, recalling \eqref{velocity_field} and \eqref{kernel}, we have
	\begin{align}\label{eq3}
		\bu(x,t) 
		&= C_\beta P.V. \int_{\R^2} K_\beta (x-y) \theta(y,t)\phi_\rho (y) dy +  C_\beta P.V. \int_{\R^2} K_\beta (x-y) \theta(y,t)(1 - \phi_\rho(y)) dy \notag \\
		&= C_\beta P.V. \int_{\R^2} \frac{(x-y)^{\perp}}{|x-y|^{2+\beta}} \theta(y,t) \phi_\rho(y) dy + C_\beta P.V. \int_{\R^2} K_\beta (x-y) \theta(y,t)(1 - \phi_\rho(y)) dy \notag \\
		&= \frac{1}{\beta}C_\beta P.V. \int_{\R^2} \nabla_{y}^{\perp}\left( \frac{1}{|x-y|^{\beta}}\right) \theta(y,t) \phi_\rho (y) dy + C_\beta P.V. \int_{\R^2} K_\beta (x-y) \theta(y,t)(1 - \phi_\rho(y)) dy \notag \\
		&= - \frac{1}{\beta} C_\beta P.V. \int_{\R^2} \frac{1}{|x-y|^{\beta}} \nabla^\perp \theta(y,t) \phi_\rho(y) dy - \frac{1}{\beta} C_\beta P.V. \int_{\R^2} \frac{1}{|x-y|^{\beta}} \theta(y,t) \nabla^\perp \phi_\rho (y) dy   \notag \\
		&\qquad + C_\beta P.V. \int_{\R^2} K_\beta (x-y) \theta(y,t)(1 - \phi_\rho(y)) dy \notag \\
		&=: {\bu}^{(1)}(x,t) + {\bu}^{(2)}(x,t) + \bu^{(3)}(x,t),
	\end{align}
	where the second to last equality follows by integration by parts. We now analyze each of these terms. By the local self-similarity of $\theta$, \eqref{ss:x0:T}, we get
	\begin{align}\label{eq4}
		\bu^{(1)}(x,t) &= - \frac{1}{\beta}C_\beta  P.V.  \int_{\R^2} \frac{1}{|x-y|^{\beta}} \nabla^\perp \theta(y,t) \phi_\rho(y) dy \notag \\
		&= - \frac{C_\beta}{\beta(T-t)^{\frac{2+\alpha -\beta}{1+\alpha}}}  P.V. \int_{\R^2} \frac{1}{|x-y|^{\beta}} \nabla^\perp \Theta \left( \frac{y}{(T-t)^{\frac{1}{1+\alpha}}}\right) \phi_\rho (y) dy \notag \\
		&= -  \frac{C_\beta}{\beta(T-t)^{\frac{\alpha -\beta}{1+\alpha}}}  P.V. \int_{\R^2} \frac{1}{|x-(T-t)^{\frac{1}{1+\alpha}}y|^{\beta}} \nabla^\perp \Theta (y) \phi_\rho (y(T-t)^{\frac{1}{1+\alpha}}) dy \notag \\
		&= -  \frac{C_\beta}{\beta(T-t)^{\frac{\alpha }{1+\alpha}}}  P.V. \int_{\R^2} \frac{1}{|(T-t)^{\frac{-1}{1+\alpha}}x -y|^{\beta}} \nabla^\perp \Theta (y) \phi_\rho (y(T-t)^{\frac{1}{1+\alpha}}) dy \notag \\
		&= - \frac{C_\beta}{\beta(T-t)^{\frac{\alpha }{1+\alpha}}} \v \left( \frac{x}{(T-t)^{\frac{1}{1+\alpha}}}, t \right),
	\end{align}
	where
	\begin{align}\label{U1(y,t)}
		\v(x,t) :=  P.V. \int_{\R^2} \frac{1}{|x -y|^{\beta}} \nabla^\perp \Theta (y) \phi_\rho (y(T-t)^{\frac{1}{1+\alpha}}) dy.
	\end{align}
	
	Next, we analyze $\bu^{(2)}(x,t)$. Note that due to the presence of $\nabla \phi_{\frac{\rho}{4}}(x)$ in the right-hand side of \eqref{eq2}, it suffices to  consider $x \in \R^2$ with ${\rho}/{8} \leq |x| \leq \rho/4$. Then, since for each such $x$ we have $|x-y| \geq \frac{|y|}{2}$ for every $|y| \geq \rho/2$, it follows that
	\begin{align}\label{u2}
		|\bu^{(2)}(x,t)| 
		&\leq C \int_{\frac{\rho}{2} \leq |y| \leq \rho}    \frac{1}{|x-y|^{\beta}} |\theta(y,t)| |\nabla^\perp \phi_\rho (y)|dy \notag \\
		&\leq  C  \int_{|y| \geq \frac{\rho}{2}} \frac{|\theta(y,t)|}{|y|^{\beta}}  dy \notag \\
		&\leq C \|\theta\|_{L^\infty(0,T; L^2(\R^2))} 
		\leq C \|\theta(0)\|_{L^2},
	\end{align}
	where in the last line we applied H\"older's inequality and \eqref{eq:max:princ} with $\tp = 2$.
	
	Finally, for the last term in \eqref{eq3}, $\bu^{(3)}(x,t)$, we proceed similarly as was done for $\bu^{(2)}(x,t)$ in \eqref{u2} and obtain that
	\begin{align}\label{eq6}
		|\bu^{(3)}(x,t)| &=  C_\beta\int_{|y| \geq \rho/2} \frac{1}{|x-y|^{\beta+1}} |\theta(y,t)|(1 - \phi_\rho(y)) dy \notag\\
		&\leq C \int_{|y| \geq \frac{\rho}{2}} \frac{|\theta(y,t)| }{|y|^{\beta +1}} dy 
		\leq C \|\theta(0)\|_{L^2}.
	\end{align}
	
	From \eqref{eq4}, \eqref{u2} and  \eqref{eq6}, we may then estimate the term in the right-hand side of \eqref{eq2} as
	\begin{align*}
		&\bigg| \int_{t_1}^{t_2}  \int_{\R^2}  |\theta(x,t)|^p  (\bu(x,t) \cdot \nabla \phi_{\frac{\rho}{4}}(x))  dx \,dt \bigg| \notag \\
		&\qquad\leq \int_{t_1}^{t_2} \int_{\R^2} |\bu_1(x,t)| |\nabla \phi_{\frac{\rho}{4}}(x)|  |\theta(x,t)|^p  dx \,dt + \int_{t_1}^{t_2} \int_{\R^2} |\bu_2(x,t)| |\nabla \phi_{\frac{\rho}{4}}
		(x)|  |\theta(x,t)|^p  dx \,dt \notag \\
		&\qquad\qquad+ \int_{t_1}^{t_2} \int_{\R^2} |\bu_3(x,t)| |\nabla \phi_{\frac{\rho}{4}}(x)|  |\theta(x,t)|^p  dx \,dt	
	\end{align*} 
	\begin{align}\label{bb}
		&\qquad\leq C  \int_{t_1}^{t_2} \frac{1}{(T-t)^{\frac{\alpha + p(1+\alpha -\beta)}{1+\alpha}}} \int_{\R^2} \left| \v \left( \frac{x}{(T-t)^{\frac{1}{1+\alpha}}}, t \right) \right| \left| \Theta \left(\frac{x}{(T-t)^{\frac{1}{1+\alpha}}}\right)\right|^p |\nabla \phi_{\frac{\rho}{4}} (x)| dx \,dt \notag \\
		&\qquad\qquad+  C \int_{t_1}^{t_2} \frac{1}{(T-t)^{\frac{p(1+\alpha -\beta)}{1+\alpha}}} \int_{\R^2} \bigg| \Theta \left(\frac{x}{(T-t)^{\frac{1}{1+\alpha}}}\right) \bigg|^p  |\nabla \phi_{\frac{\rho}{4}}(x)| dx \,dt \notag \\
		&\qquad\leq C \int_{t_1}^{t_2} \frac{1}{(T-t)^{\frac{ \alpha -2 + p(1+\alpha -\beta)}{1+\alpha}}} \int_{\R^2} |\v \left(y, t \right)| | \Theta(y) |^p |\nabla \phi_{\frac{\rho}{4}} (y(T-t)^{\frac{1}{1+\alpha}})| dy \,dt \notag \\
		&\qquad\qquad+ C \int_{t_1}^{t_2} \frac{1}{(T-t)^{\frac{p(1+\alpha -\beta)-2}{1+\alpha}}} \int_{\R^2}  |\Theta(y)|^p  |\nabla \phi_{\frac{\rho}{4}}(y(T-t)^{\frac{1}{1+\alpha}})| dy \,dt.
	\end{align}
	
	In view of the support of $\nabla \phi_{\frac{\rho}{4}}$, we may restrict the integrands in \eqref{bb} to $(y,t) \in \R^2 \times [t_1,t_2]$ such that $\rho/8 \leq |y| (T-t)^{\frac{1}{1+\alpha}} \leq \rho/4$. In particular, each such $y$ satisfies $\rho l_1/8 \leq |y| \leq \rho l_2/4$, where we recall that $l_i = (T - t_i)^{-\frac{1}{1+\alpha}}$, $i=1,2$. Then, for each fixed $y \in \R^2$ with $\rho l_1/8 \leq |y| \leq \rho l_2/4$, we define the set
	\begin{align}\label{ball-defi}
		\Ay := \left\{ t \in [t_1, t_2] \,:\, \frac{\rho}{8} \frac{1}{|y|} \leq (T -t)^{\frac{1}{1+\alpha}} \leq \frac{\rho}{4} \frac{1}{|y|}\right\}.
	\end{align}
	After rearrangement, it is easy to see that 
	\[\Ay\subset \left[T - \left(\frac{\rho}{4|y|}\right)^{1 + \alpha}, T - \left(\frac{\rho}{8|y|}\right)^{1 + \alpha}\right],\]
	so that its length satisfies $|\Ay| \leq c_{\alpha,\rho}/|y|^{1+\alpha}$. Thus, denoting by $\ind_{\Ay}$ the indicator function of the set $\Ay$, it follows from \eqref{bb} that	
	\begin{align}\label{eq7}
		&\bigg| \int_{t_1}^{t_2}  \int_{\R^2}  |\theta(x,t)|^p  (\bu(x,t) \cdot \nabla \phi_{\frac{\rho}{4}}(x))  dx \, dt \bigg| \notag \\
		&\qquad\leq C \int_{t_1}^{t_2} \int_{\frac{\rho}{8} l_1 \leq |y| \leq \frac{\rho}{4}l_2} \frac{|\v\left(y, t \right)| | \Theta(y) |^p}{|y|^{2 -\alpha - p(1+\alpha -\beta)}} \ind_{\Ay}(t) dy \, dt \notag \\
		&\qquad\qquad\qquad + C \int_{t_1}^{t_2}\int_{\frac{\rho}{8} l_1 \leq |y| \leq \frac{\rho}{4}l_2} \frac{|\Theta(y)|^p}{|y|^{2-p(1+\alpha -\beta)}}   \ind_{\Ay}(t)  dy \, dt \notag \\
		&\qquad\leq C \int_{\frac{\rho}{8} l_1 \leq |y| \leq \frac{\rho}{4}l_2} \frac{| \Theta(y) |^p}{|y|^{2 -\alpha - p(1+\alpha -\beta)}}  \int_{t_1}^{t_2} |\v\left(y, t \right)| \ind_{\Ay}(t) dt \, dy \notag \\
		&\qquad\qquad\qquad+ C\int_{\frac{\rho}{8} l_1 \leq |y| \leq \frac{\rho}{4}l_2} \frac{|\Theta(y)|^p}{|y|^{2-p(1+\alpha -\beta)}}   \int_{t_1}^{t_2} \ind_{\Ay}(t) dt \, dy \notag \\
		&\qquad\leq C \int_{\frac{\rho}{8} l_1 \leq |y| \leq \frac{\rho}{4}l_2} \frac{|\vt (y)| | \Theta(y) |^p}{|y|^{2 -\alpha - p(1+\alpha -\beta)}} dy  + C \int_{\frac{\rho}{8} l_1 \leq |y| \leq \frac{\rho}{4}l_2} \frac{|\Theta(y)|^p}{|y|^{3 + \alpha -p(1+\alpha -\beta)}} dy ,
	\end{align}
	where 
	\begin{align}\label{vt}
		\vt (y)&:= \int_{t_1}^{t_2} |\v(y,t)| \ind_{\Ay}(t) dt \notag \\
		&= \int_{t_1}^{t_2} \bigg| \int_{\R^2} 
		\frac{1}{|y -z|^{\beta}} \nabla^\perp \Theta (z) \phi_\rho (z(T-t)^{\frac{1}{1+\alpha}}) dz \bigg| \ind_{\Ay}(t) dt.
	\end{align}
	Plugging \eqref{eq7} into \eqref{eq2} and recalling \eqref{eq_selfsimilar}, yields
	\begin{multline}\label{maininequality} \bigg| l_2^{p(1 + \alpha - \beta) -2} \int_{\R^2} |\Theta (y)|^p \phi_{\frac{\rho}{4}} (y l_{2}^{-1}) dy - l_1^{p(1 + \alpha - \beta) -2} \int_{\R^2} |\Theta (y)|^p \phi_{\frac{\rho}{4}} (y l_{1}^{-1}) dy \bigg| \\
		\leq C \int_{\frac{\rho}{8} l_1 \leq |y| \leq \frac{\rho}{4}l_2} \frac{|\vt(y)| | \Theta(y) |^p}{|y|^{2 -\alpha - p(1+\alpha -\beta)}} dy  + C \int_{\frac{\rho}{8} l_1 \leq |y| \leq \frac{\rho}{4}l_2} \frac{|\Theta(y)|^p}{|y|^{3 + \alpha -p(1+\alpha -\beta)}} dy.
	\end{multline}
	Note that, by Hölder's inequality and assumption \eqref{profile-hipothesis-function}, it follows that
	\begin{align*}
		l_2^{p(1 + \alpha - \beta) -2} \int_{\R^2} |\Theta (y)|^p \phi_{\frac{\rho}{4}} (y l_{2}^{-1}) dy 
		&\leq C l_2^{p(1 + \alpha - \beta) -2} \bigg(\int_{|y| \leq \frac{\rho}{4}l_2} |\Theta (y)|^r dy\bigg)^{\frac{p}{r}} l_{2}^{2\left(1 - \frac{p}{r}\right)} \notag \\
		&\leq C l_{2}^{p(1 + \alpha -\beta) + (\gamma_0 - 2)\frac{p}{r}}.
	\end{align*}
	Since, by the current assumption on $\alpha$, we have $(1 + \alpha -\beta) + (\gamma_0 - 2)/r < 0$, then
	\begin{align}\label{limitL2}
		l_2^{p(1 + \alpha - \beta) -2} \int_{\R^2} |\Theta (y)|^p \phi_{\frac{\rho}{4}} (y l_{2}^{-1}) dy \rightarrow 0 \quad \mbox{ as } l_2 \rightarrow \infty.
	\end{align}
	Thus, denoting $L := \frac{\rho}{8}l_1$ and taking the limit in \eqref{maininequality} as $t_2 \to T$, so that $l_2 \to \infty$, we obtain
	\begin{align}\label{maininequality2}
		\frac{1}{L^{2- p(1 + \alpha - \beta)}} \int_{|y| \leq L} |\Theta (y)|^p dy  
		\leq C \int_{|y| \geq L} \frac{|\vt (y)| | \Theta(y) |^p}{|y|^{2 -\alpha - p(1+\alpha -\beta)}} dy  + C \int_{|y| \geq L} \frac{|\Theta(y)|^p}{|y|^{3 + \alpha -p(1+\alpha -\beta)}} dy.
	\end{align}
	In what follows, we always assume that $L$ is sufficiently large (equivalently, $t_1$ is sufficiently close to $T$), so that assumptions \eqref{profile-hipothesis-function} and \eqref{profile-hipothesis-derivada} can be applied.
	
	We now further estimate each of the terms on the right-hand side of \eqref{maininequality2} by splitting the integrals according to a dyadic decomposition. For the first term, we make use of \cref{lemmamain} below, which yields a control on the $L^r$ norm of the function $\vt$ on a dyadic shell under assumption \eqref{profile-hipothesis-derivada}. We obtain
	\begin{align}\label{eq9}
		&\int_{|y| \geq L} \frac{|\vt (y)| | \Theta(y) |^p}{|y|^{2 -\alpha - p(1+\alpha -\beta)}} dy \notag \\
		&\qquad\leq \sum_{k=0}^{\infty} \frac{1}{(2^k L)^{2 - \alpha - p(1 + \alpha - \beta)}} \int_{2^k L \leq |y| \leq 2^{k+1} L}  |\vt (y)| |\Theta(y)|^p dy \notag \\
		&\qquad\leq C \sum_{k=0}^{\infty} \frac{1}{(2^k L)^{2 - \alpha - p(1 + \alpha - \beta)}} \left( \int_{|y| \sim 2^kL} |\vt (y)|^r dy \right)^{\frac{1}{r}} \left( \int_{|y| \sim 2^kL} |\Theta (y)|^r dy \right)^{\frac{p}{r}}  (2^kL)^{2\left(1 - \frac{p+1}{r}\right)} \notag \\
		&\qquad\leq C\sum_{k=0}^{\infty} \frac{1}{(2^k L)^{2 - \alpha - p(1 + \alpha - \beta)}} (2^k L)^{1-\alpha - \beta + \frac{\gamma_1}{r}} (2^k L)^{\gamma_0\frac{p}{r}}  (2^k L)^{ 2 - \frac{2p}{r} - \frac{2}{r}} \notag \\
		&\qquad\leq C\sum_{k=0}^{\infty} (2^kL)^{p(1+\alpha -\beta) -2 - \beta + 3 + \frac{\gamma_1 - 2}{r} + \frac{(\gamma_0 -2)p}{r}} \notag \\
		&\qquad\leq C L^{p(1+\alpha -\beta) -2 - \beta + 3 + \frac{\gamma_1 - 2}{r} + \frac{(\gamma_0 -2)p}{r}},
	\end{align}
	where in the second inequality we used that $r \geq p+1$ and applied H\"older's inequality, and in the last inequality we used the hypotheses that $\alpha < \beta -1 + \frac{2-\gamma_0}{r}$ and $\gamma_1 < r(\beta-1)$. 
	
	For the second term in the right-hand side of \eqref{maininequality2}, applying again the dyadic decomposition together with Hölder's inequality, yields
	\begin{align}\label{eq10}
		\int_{|y| \geq L} \frac{|\Theta (y)|^p}{|y|^{3 + \alpha - p(1+\alpha - \beta})} dy 
		&\leq  \sum_{k=0}^{\infty} \frac{1}{(2^k L)^{3 +\alpha - p(1 + \alpha - \beta)}} \int_{|y| \sim 2^kL} |\Theta(y)|^p dy \notag \\
		&\leq  C \sum_{k=0}^{\infty} \frac{1}{(2^k L)^{3 +\alpha - p(1 + \alpha - \beta)}} \bigg(\int_{|y| \sim 2^kL} |\Theta(y)|^r dy\bigg)^{\frac{p}{r}} (2^k L)^{2\left(1-\frac{p}{r}\right)} \notag \\
		&\leq C \sum_{k=0}^{\infty} (2^k L)^{p(1+\alpha -\beta) -3 - \alpha} (2^k L)^{\gamma_0\frac{p}{r}} (2^k L)^{2\left(1-\frac{p}{r}\right)} \notag \\
		&\leq C \sum_{k=0}^{\infty} (2^k L)^{p(1 +\alpha -\beta) -2 -\alpha +1 + (\gamma_0 -2)\frac{p}{r}} \notag \\
		&\leq  C L^{p(1 +\alpha -\beta) -2 + 1-\alpha  + \frac{(\gamma_0 -2)p}{r}},
	\end{align}
	where we used that $-1 < \alpha < \beta -1 + \frac{2-\gamma_0}{r}$. Combining \eqref{eq9} and \eqref{eq10} with \eqref{maininequality2}, we deduce that
	\begin{align}\label{eqa_0}
		\int_{|y| \leq L} |\Theta (y)|^p dy  &\leq  C L^{3-\beta + \frac{(\gamma_1-2)}{r} + \frac{(\gamma_0 -2)p}{r}} + C L^{1 -\alpha + \frac{(\gamma_0 -2)p}{r}} \leq C L^{a_0},
	\end{align}
	where
	\begin{eqnarray}\label{a_0defi}
		a_0:= \max \left\{ 1 -\alpha + \frac{(\gamma_0 -2)p}{r}, 3 - \beta + \frac{(\gamma_1-2)}{r} + \frac{(\gamma_0 -2)p}{r}\right\}.
	\end{eqnarray}
	
	Note that, if $a_0 <0$, we conclude that $\Theta \equiv 0$ on $\R^2$, i.e., no locally self-similar blowup occurs and the proof is finished. Otherwise, if $a_0 \geq 0$, we improve the estimates in \eqref{eq9} and \eqref{eq10} by making use of the new upper bound in \eqref{eqa_0}. In particular, for the first term in the right-hand side of \eqref{maininequality2}, we leverage \eqref{eqa_0} via a suitable interpolation inequality. Firstly, for simplicity of notation, we denote $q:= rp/(r-1)$, and write the given assumptions on $\gamma_0$ and $\gamma_1$ in terms of $q$ as
	\begin{align}\label{cond:gamma1:q}
		0 \leq \gamma_1 < r \left( \beta -1 - 2 \left( 1 - \frac{p}{q} \right) \right) +2, 
	\end{align}
	and $\gamma_0 \in [0, \gamma_1 + r]$ with
	\begin{align} \label{cond:gamma0:q}
		\gamma_0 < 
		\frac{(r-p)q}{(q-p)p} \left( \beta - 1 
		- \frac{\gamma_1}{r} \right).
	\end{align}
	Note that $\frac{1}{r} + \frac{p}{q} = 1$ and $p < q \leq r$. Then, by interpolation, we have
	\begin{align}
		\int_{|y| \leq L} |\Theta(y)|^{q} dy &\leq \left( \int_{|y| \leq L} |\Theta (y)|^p dy \right)^{\delta} \left( \int_{|y| \leq L} |\Theta(y)|^r dy \right)^{1 -\delta}  \label{eq11a}\\
		&\leq C L^{a_0\delta + (1-\delta)\gamma_0}, \quad  \textrm{with }  \delta := \frac{r-q}{r-p} \in [0,1). \label{eq11}
	\end{align}
	
	Next, employing once again the dyadic decomposition and H\"older's inequality, we derive via \eqref{eqa_0}, \eqref{eq11}, and \cref{lemmamain} that
	\begin{align}\label{ff}
		&\int_{|y| \geq L} \frac{|\vt (y)| | \Theta(y) |^p}{|y|^{2 -\alpha - p(1+\alpha -\beta)}} dy \notag \\
		&\qquad\leq  C \sum_{k=0}^{\infty} \frac{1}{(2^k L)^{2 - \alpha - p(1 + \alpha - \beta)}} \left( \int_{|y| \sim 2^kL} |\Theta (y)|^{q} dy \right)^{\frac{p}{q}} \left( \int_{|y| \sim 2^kL} |\vt (y)|^{r} dy \right)^{\frac{1}{r}} 
		\notag \\
		&\qquad\leq C \sum_{k=0}^{\infty} \frac{1}{(2^k L)^{2 - \alpha - p(1 + \alpha - \beta)}} (2^k L)^{\frac{p}{q}\left(a_0 \delta + (1-\delta)\gamma_0\right)} (2^k L)^{\frac{\gamma_1}{r} + 1 -\alpha - \beta}
		\notag \\
		&\qquad\leq C \sum_{k=0}^{\infty} (2^k L)^{ p(1 + \alpha -\beta) -2 + a_0 + \frac{p}{q}(1-\delta)(\gamma_0 - a_0) + 
		{\gamma_1}{r}  + 1 - \beta - a_0 \left( 1 - \frac{p}{q}\right) }
	\notag \\
	&\qquad\leq C \sum_{k=0}^{\infty} (2^k L)^{ p(1 +\alpha - \beta) -2 + a_0 -a_1},
\end{align}
where
\begin{align}\label{a1}
	a_1 := \frac{p}{q}(1-\delta)(a_0 - \gamma_0) + \beta -1 
	-\frac{\gamma_1}{r} 
	+ a_0\left( 1 - \frac{p}{q}\right) .
\end{align} 

Recall from \eqref{eq9} and \eqref{eq10} that $ a_0 + p(1+\alpha - \beta) -2 <0$. Then, to obtain a finite sum in \eqref{ff}, it suffices to show that $a_1 \geq 0$. 

For $r=p+1$, it is not difficult to show by using the assumption \eqref{assp:gamma0} on $\gamma_0$ that $a_1 > a_0 \geq 0$. Now suppose that $r > p+1$, so that $q = rp/(r-1) < r$.
Firstly, assume that $a_0 = 1 -\alpha + \frac{(\gamma_0 -2)p}{r}$. From \eqref{a_0defi}, it follows that $-1 < \alpha \leq \beta -2 + \frac{2-\gamma_1}{r}$. Moreover, since $1 - \delta = (q - p)/(r-p)$, we have
\begin{align}
	a_1 &= \frac{p(q -p)}{q(r - p)} (a_0 - \gamma_0) + \beta - 1 
	- \frac{\gamma_1}{r}
	+ a_0 \frac{(q - p)}{q} \notag\\ 
	&= \frac{r(q -p)}{q(r - p)} a_0 - \frac{p(q -p)}{q(r - p)} \gamma_0 + \beta - 1 + \frac{2 - \gamma_1}{r} - 2 \frac{(q - p)}{q} \label{eq:a1} \\
	&= \frac{r(q -p)}{q(r - p)} (-1 - \alpha) + \beta - 1 + \frac{2 - \gamma_1}{r}  > 0,
	\label{a1positive}
\end{align}
where the inequality follows by using that $r > q$, which implies $r (q - p)< q (r  - p)$.

Now, if $a_0 = 3 - \beta + \frac{(\gamma_1-2)}{r} + \frac{(\gamma_0 -2)p}{r}$, then from \eqref{a_0defi} we have $\beta -2 + \frac{2-\gamma_1}{r} \leq \alpha < \beta -1 + \frac{2-\gamma_0}{r}$. Hence, from \eqref{eq:a1},
\begin{align}\label{a1positive2}
	a_1 
	&=\frac{r(q -p)}{q(r - p)} \left( 3 - \beta + \frac{(\gamma_1-2)}{r} + \frac{(\gamma_0 -2)p}{r} \right) - \frac{p(q -p)}{q(r - p)} \gamma_0 + \beta - 1 + \frac{2 - \gamma_1}{r} - 2 \frac{(q - p)}{q} \notag \\
	&=\frac{r(q -p)}{q(r - p)} \left( 3 - \beta + \frac{(\gamma_1-2)}{r} - \frac{2p}{r} \right) + \beta - 1 + \frac{2 - \gamma_1}{r} - 2 \frac{(q - p)}{q} \notag \\
	&= \left( \beta - 1 + \frac{2 - \gamma_1}{r} \right) \left( 1 - \frac{r(q - p)}{q (r - p)} \right) 
	= \left( \beta - 1 + \frac{2 - \gamma_1}{r} \right)  \frac{p (r - q)}{q(r - p)}  > 0,
\end{align}
where we used that $r> q > p$ and $\gamma_1 < r(\beta -1) + 2$.

Therefore, $a_1 > 0$, and it follows from \eqref{ff} that
\begin{align}\label{eq13}
	\int_{|y| \geq L} \frac{|\vt(y)| | \Theta(y) |^p}{|y|^{2 -\alpha - p(1+\alpha -\beta)}} dy \leq C L^{ p(1 +\alpha - \beta) -2 + a_0 -a_1}.
\end{align}

Similarly, since $a_0 +p(1+\alpha - \beta) -2 <0$ and $\alpha>-1$, we obtain for the second term in the right-hand side of \eqref{maininequality2} that 
\begin{align}\label{eq14}
	\int_{|y| \geq L} \frac{|\Theta (y)|^p}{|y|^{3 + \alpha - p(1+\alpha - \beta)}} dy 
	&\leq  \sum_{k=0}^{\infty} \frac{1}{(2^k L)^{3 +\alpha - p(1 + \alpha - \beta)}} \int_{|y| \sim 2^kL} |\Theta(y)|^p dy \notag \\
	&\leq C \sum_{k=0}^{\infty} \frac{1}{(2^k L)^{3 +\alpha - p(1 + \alpha - \beta)}} (2^kL)^{a_0} \notag \\
	&\leq C L^{p(1+ \alpha - \beta) -2 + a_0 -(1+\alpha)}.
\end{align}

Plugging  \eqref{eq13} and \eqref{eq14} into \eqref{maininequality2}, we deduce that
\begin{align}\label{b_0}
	\int_{|y| \leq L} |\Theta (y)|^p dy  &\leq  C L^{a_0 - a_1}+ cL^{a_0 -(1+\alpha)} \notag \\
	&\leq C L^{a_0 - b_0}, \quad \textrm{where }  b_0 :=\min\{a_1, 1 +\alpha\} >0.
\end{align}

Again, if $a_0 -b_0 < 0$ then the proof is finished. Otherwise, we proceed with the bootstrap argument by now leveraging \eqref{b_0} to obtain improved estimates. To put this argument into a more general form, suppose that  
\begin{align}
	\int_{|y| \leq L} |\Theta (y)|^p dy  \leq C L^\sigma \quad \mbox{with } \sigma \leq a_0. \notag 
\end{align}

From the interpolation inequality \eqref{eq11a}, we have
\begin{align}\label{eq15}
	\int_{|y| \leq L} |\Theta(y)|^{q} dy 
	\leq C L^{\sigma\delta + \gamma_0(1-\delta)},
\end{align}
where we recall that $\delta = (r - q)/(r - p)$. Then, proceeding similarly as in \eqref{ff} and recalling the definition of $a_1$ in \eqref{a1}, we obtain
\begin{align}\label{eq16}
	&\int_{|y| \geq L} \frac{|\vt (y)| | \Theta(y) |^p}{|y|^{2 -\alpha - p(1+\alpha -\beta)}} dy \notag \\
	&\qquad\leq C \sum_{k=0}^{\infty} \frac{1}{(2^k L)^{2 - \alpha - p(1 + \alpha - \beta)}} (2^k L)^{\frac{p}{q}(\sigma\delta + \gamma_0(1-\delta))} (2^k L)^{1 - \alpha -\beta + \frac{\gamma_1 }{r}} 
	\notag \\
	&\qquad\leq  C \sum_{k=0}^{\infty} (2^k L)^{p(1+\alpha -\beta) -2 +  \sigma \frac{\delta p}{q} + \gamma_0 (1-\delta) \frac{p}{q} +1 - \beta + 
	\frac{\gamma_1}{r} }
\notag \\
&\qquad\leq  C \sum_{k=0}^{\infty} (2^k L)^{p(1+\alpha -\beta) -2 + a_0 -a_1 + \sigma \frac{\delta p}{q}-a_0\frac{\delta p}{q}} \notag \\
&\qquad\leq C L^{p(1+\alpha -\beta) -2 + a_0 -a_1 + \frac{\delta p}{q} \left( \sigma - a_0 \right) },
\end{align}
where the last inequality follows from the fact that $a_0 + p(1+\alpha -\beta) -2 <0$, $a_1 >0,$ and $\sigma \leq a_0$. 

Next, similarly as in \eqref{eq14}, we obtain for the second term in the right-hand side of \eqref{maininequality2} that
\begin{align}\label{eq17}
\int_{|y| \geq L} \frac{|\Theta (y)|^p}{|y|^{3 + \alpha - p(1+\alpha - \beta)}} dy 
&\leq C \sum_{k=0}^{\infty} \frac{1}{(2^k L)^{3 +\alpha - p(1 + \alpha - \beta)}} (2^k L)^\sigma \notag \\
&\leq C L^{p(1 +\alpha -\beta) -2 + \sigma -(1 +\alpha)},
\end{align}
where the last inequality is justified by the fact that $ p(1+\alpha -\beta) -2 + \sigma \leq p(1+\alpha -\beta) -2 + a_0<0$, and $\alpha > -1$. Therefore, combining \eqref{eq16} and \eqref{eq17} with \eqref{maininequality2}, yields
\begin{align}\label{profileestimates2}
\int_{|y| \leq L} |\Theta (y)|^p dy  
\leq  C L^{a_0 -a_1 + \left( \sigma - a_0 \right)\dep} + C L^{\sigma -(1 + \alpha)}, 
\end{align}
where
\begin{align*}
\dep := \frac{\delta p}{q} \in [0,1). 
\end{align*}
Note that
\begin{empheq}[left=\displaystyle{ \int_{|y| \leq L} |\Theta(y)|^p dy \leq} \empheqlbrace]{alignat=2}
& C L^{\sigma - (1 + \alpha)} & \quad \mbox{ if }\,\, a_1 - (1 + \alpha) \geq (a_0 - \sigma) (1 - \dep), \label{ineq:sigma:1} \\
& C L^{a_0 - a_1 + (\sigma - a_0) \dep} & \quad \mbox{ if } \,\,a_1 - (1 + \alpha) < (a_0 - \sigma) (1 - \dep). \label{ineq:sigma:2}
\end{empheq}

Let us now specialize this estimate to the case $\sigma = a_0 - b_0$, as in \eqref{b_0}, where we recall that $b_0 = \min\{a_1, 1 + \alpha\}$. Firstly, suppose $b_0 =a_1$, so that $a_1 \leq 1 + \alpha$. Since $\dep < 1$, it follows from \eqref{ineq:sigma:2} with $\sigma = a_0 - a_1$ that 
\begin{align}\label{eq19}
\int_{|y| \leq L} |\Theta (y)|^p dy  \leq  C L^{a_0 - a_1\left(1 +  \dep\right)}.
\end{align}
If $a_0 - a_1(1 +\dep) < 0$, then $\Theta \equiv 0$ in $\R^2$. Otherwise, i.e. if $a_0 - a_1(1 +\dep)  \geq0$, we invoke \eqref{ineq:sigma:2} with $\sigma = a_0 - a_1(1+\dep)$ and obtain
\begin{align}\label{profileestimates3}
\int_{|y| \leq L} |\Theta (y)|^p dy  
\leq C L^{a_0 - a_1\left(1 +  \dep + \dep^2 \right)}.
\end{align}
Hence, repeating this process $n$ times, for any given $n \in \N$, we arrive at
\begin{align}\label{20}
\int_{|y| \leq L} |\Theta (y)|^p dy  
&\leq  C L^{a_0 - a_1(1 +  \dep + \dep^2 + \ldots + \dep^n)} 
= C L^{a_0 - a_1\left(\frac{1 - \delta_p^{n+1}}{1- \delta_p}\right)}.
\end{align}

Observe that 
\begin{align}\label{lim:dep}
	\frac{1 - \dep^{n+1}}{1-\dep} \rightarrow \frac{1}{1-\dep} =   \frac{q (r - p)}{r (q - p)} \quad \mbox{as } n \rightarrow \infty.
\end{align}
Moreover, recalling the definition of $a_1$ in \eqref{a1}, and particularly \eqref{eq:a1}, we have
\begin{align}\label{a_0 -a_1}
&a_0 - a_1 \frac{q (r - p)}{r (q - p)} \notag \\
&\qquad= a_0 -  \frac{q (r - p)}{r (q - p)} \left[  \frac{r(q -p)}{q(r - p)} a_0 - \frac{p(q -p)}{q(r - p)} \gamma_0 + \beta - 1 
- \frac{\gamma_1}{r}
\right] \notag \\
&\qquad=  \frac{p}{r}\left[\gamma_0  -  \frac{q (r - p)}{p (q - p)} \left( \beta  -1  
- \frac{\gamma_1}{r}
\right)\right] < 0,
\end{align} 
where the inequality follows from \eqref{cond:gamma0:q}. In view of \eqref{lim:dep} and \eqref{a_0 -a_1}, it follows that there exists $n$ sufficiently large for which the power of $L$ in \eqref{20} is negative. This implies that $\Theta \equiv 0$ in $\R^2$.

Next, let us consider the case when $b_0 = 1 + \alpha$, so that $a_1 \geq 1 + \alpha$. We apply \eqref{ineq:sigma:1}-\eqref{ineq:sigma:2} with $\sigma = a_0 - (1 + \alpha)$ and obtain
\begin{empheq}[left=\displaystyle{ \int_{|y| \leq L} |\Theta(y)|^p dy \leq} \empheqlbrace]{alignat=2}
& C L^{a_0 - 2(1 + \alpha)} & \quad \mbox{ if }\,\, a_1 - (1 + \alpha) \geq (1+ \alpha) (1 - \dep), \label{ineq:m0:1} \\
& C L^{a_0 - a_1 - (1 + \alpha) \dep} & \quad \mbox{ if } \,\,a_1 - (1 + \alpha) < (1 + \alpha) (1 - \dep). \label{ineq:m0:2}
\end{empheq}
If the powers of $L$ in both \eqref{ineq:m0:1} and \eqref{ineq:m0:2} are negative, then we conclude the proof. Otherwise, we proceed to improve on the upper bound of $\int_{|y| \leq L} |\Theta(y)|^p dy$ again via bootstrapping. To this end, we start by taking $m_0 \in \{1,2,\ldots\}$ as the smallest integer such that
\begin{align}\label{ineq:m0}
a_1 - (1 + \alpha) < m_0 (1 + \alpha) (1 - \dep).
\end{align}
If $m_0 = 1$, then \eqref{ineq:m0:2} holds. On the other hand, if $m_0 \geq 2$ then
\begin{align}\label{ineq:m0:b}
(m_0 -1)(1 + \alpha) (1 - \dep) \leq a_1 - (1 + \alpha).
\end{align}
and \eqref{ineq:m0:1} holds. In the latter case, we may repeat this computation $(m_0-1)$-times, where at each $k$th time with $k=1,\ldots,m_0-2$, we invoke \eqref{ineq:sigma:1} with $\sigma = a_0 - (k + 1)(1 + \alpha)$, and at $k = m_0-1$ we invoke \eqref{ineq:sigma:2} with $\sigma = a_0 - m_0(1 + \alpha)$. We then arrive at
\begin{align}\label{Theta:Lp:a0:b1}
\int_{|y| \leq L} |\Theta (y)|^p dy  \leq  C L^{a_0 - b_1}, \quad \mbox{where } b_1 := a_1  + m_0 (1 + \alpha) \dep.
\end{align}
Note that $b_1 \geq a_1$, and that \eqref{Theta:Lp:a0:b1} in fact holds for all $m_0 \geq 1$.

If $a_0 - b_1 < 0$, the proof is finished. Otherwise, we proceed similarly as before and apply \eqref{ineq:sigma:1}-\eqref{ineq:sigma:2} with $\sigma = a_0 - b_1$, which yields
\begin{empheq}[left=\displaystyle{ \int_{|y| \leq L} |\Theta(y)|^p dy \leq} \empheqlbrace]{alignat=2}
& C L^{a_0 - b_1 - (1 + \alpha)} & \quad \mbox{ if }\,\, a_1 - (1 + \alpha) - b_1 (1 - \dep) \geq 0,  \\
& C L^{a_0 - a_1 - b_1 \dep} & \quad \mbox{ if } \,\,a_1 - (1 + \alpha) - b_1 (1 - \dep) < 0.
\end{empheq}
Then, if necessary, we proceed by taking $m_1 \in \{0,1,2,\ldots\}$ the smallest integer such that
\begin{align*}
a_1 - (1 + \alpha) - b_1 (1 - \dep) < m_1 (1 + \alpha) (1 - \dep).
\end{align*}
After repeating this process $m_1$ times, we obtain
\begin{align*}
\int_{|y| \leq L} |\Theta (y)|^p dy \leq C L^{a_0 - b_2}, \quad \mbox{where } b_2 := a_1  + (b_1 + m_1 (1 + \alpha) ) \dep.
\end{align*}
Here we note that $b_2 \geq a_1 + a_1 \dep$.

We may keep on iteratively repeating the same argument if necessary and denote by $m_n \in \{0,1,2,\ldots\}$, for each $n \in \N$, $n=2,3,\ldots$, the smallest integer such that 
\begin{align*}
a_1 - (1 + \alpha) - b_n (1 - \dep) < m_n (1 + \alpha) (1 - \dep)
\end{align*}
to obtain
\begin{align}\label{ineq:bn}
\int_{|y| \leq L} |\Theta (y)|^p dy \leq  C L^{a_0 - b_{n+1}}, \quad \mbox{where } b_{n+1} := a_1  + (b_n + m_n (1 + \alpha) ) \dep.
\end{align}
Moreover, we have
\begin{align*}
b_{n+1} \geq a_1 (1 + \dep + \ldots + \dep^n).
\end{align*}
Therefore, by the same argument from \eqref{20}-\eqref{a_0 -a_1}, we deduce that there exists $n$ sufficiently large for which the power of $L$ in \eqref{ineq:bn} is negative, and consequently $\Theta \equiv 0$ in $\R^2$.
This concludes the proof of this case.	
\end{case}

\begin{case}\label{case3} 
Finally, suppose that $\beta -1 + \frac{2-\gamma_0}{r} \leq \alpha \leq \beta -1 + \frac{2}{p}$. In this case, we prove that either $\Theta \equiv 0$ in $\R^2$, or $\Theta \not\equiv 0$ and \eqref{growth-profile} holds. 

Assume $\Theta \not\equiv 0$. From the proof of \cref{case1}, and particularly \eqref{eq1step2}, it follows that
\begin{align}\label{Lp:upper:bound}
\int_{|y| \leq L} |\Theta(y)|^p dy \lesssim L^{2 - p(1 + \alpha - \beta)} \quad \mbox{ for all } L \gg 1.
\end{align}
Therefore, it only remains to show that 
\begin{align}\label{Lp:lower:bound}
\int_{|y| \leq L} |\Theta(y)|^p dy \gtrsim  L^{2 - p(1 + \alpha - \beta)} \quad \mbox{ for all } L \gg 1.
\end{align}

Suppose by contradiction that \eqref{Lp:lower:bound} does not hold. Then, there exists a sequence of positive numbers $L_i$, $i \in \N$, such that $L_i \to \infty$ as $i \to \infty$ and
\begin{align*}
\frac{1}{L_i^{2 - p(1 + \alpha - \beta)}} \int_{|y| \leq L_i} |\Theta(y)|^p dy \rightarrow 0 \quad \mbox{as } i \to \infty.
\end{align*}

Taking $l_2 = 4 L_i/ \rho$ and $L := \rho l_1/8 \gg 1$ in \eqref{maininequality}, it follows after taking $i \to \infty$ that
\begin{align}\label{maininequalitystep3}
\frac{1}{L^{2- p(1 + \alpha - \beta)}} \int_{|y| \leq L} |\Theta (y)|^p dy  
\leq C \int_{|y| \geq L} \frac{|\vt (y)| | \Theta(y) |^p}{|y|^{2 -\alpha - p(1+\alpha -\beta)}} dy  + C \int_{|y| \geq L} \frac{|\Theta(y)|^p}{|y|^{3 + \alpha -p(1+\alpha -\beta)}} dy.
\end{align}

We now proceed similarly as in \eqref{eq11a}-\eqref{b_0}. Namely, recalling the notation $q:= rp/(r-1)$, we obtain
by interpolation, \eqref{Lp:upper:bound} and assumption \eqref{profile-hipothesis-function} that
\begin{align*}
\int_{|y| \leq L} |\Theta(y)|^{q} dy \leq \left( \int_{|y| \leq L} |\Theta (y)|^p dy \right)^{\delta} \left( \int_{|y| \leq L} |\Theta(y)|^r dy \right)^{1 -\delta} 
\leq C L^{(2 -p(1+\alpha -\beta))\delta + \gamma_0 (1 - \delta)},
\end{align*}
where $\delta = (r - q)/(r - p)$.

Proceeding analogously as in \eqref{ff} and recalling that $\dep := \delta p / q$, we estimate
\begin{align}\label{tilde_a0_negativo}
&\int_{|y| \geq L} \frac{|\vt (y)| | \Theta(y) |^p}{|y|^{2 -\alpha - p(1+\alpha -\beta)}} dy \notag \\
&\quad\leq \sum_{k=0}^{\infty} \frac{1}{(2^k L)^{2 - \alpha - p(1 + \alpha - \beta)}} \left( \int_{|y| \sim 2^k L} |\Theta (y)|^q dy \right)^{\frac{p}{q}} \left( \int_{|y| \sim 2^kL} |\vt (y)|^{r} dy \right)^{\frac{1}{r}} 
\notag \\
&\quad\leq C \sum_{k=0}^{\infty} (2^k L)^{p(1 + \alpha - \beta) +\alpha -2} (2^kL)^{\frac{p}{q}\left(2-p(1+\alpha -\beta)\right)\delta + \gamma_0 \frac{p}{q}(1-\delta)} (2^kL)^{1 - \alpha -\beta + \frac{\gamma_1}{r}} 
\notag \\
&\quad\leq C L^{(p(1+\alpha - \beta) -2)\left(1 - \dep \right) +  \gamma_0 \frac{p}{q}(1-\delta)  + 1-\beta +
	\frac{\gamma_1}{r}},
\end{align}
where in the last inequality we used that $\alpha \leq \beta - 1 + \frac{2}{p}$ and condition \eqref{cond:gamma0:q}.

Moreover, analogously as in \eqref{eq14}, we obtain
\begin{align}\label{eq5step3}
\int_{|y| \geq L} \frac{|\Theta (y)|^p}{|y|^{3 + \alpha - p(1+\alpha - \beta)}} dy 
&\leq \sum_{k=0}^{\infty} \frac{1}{(2^k L)^{3 +\alpha - p(1 + \alpha - \beta)}} \int_{|y| \sim 2^kL} |\Theta(y)|^p dy \notag \\
&\leq C \sum_{k=0}^{\infty} (2^k L)^{p(1 + \alpha - \beta) -3 -\alpha} (2^k L)^{2 -p(1+\alpha -\beta)} \leq C L^{-(1+\alpha)},
\end{align}
where we recall that $\alpha > -1$.

Thus, from \eqref{maininequalitystep3}, 
\begin{align*}
\frac{1}{L^{2- p(1 + \alpha - \beta)}} \int_{|y| \leq L} |\Theta (y)|^p dy  
\leq C L^{(p(1+\alpha - \beta) -2)\left(1 - \dep \right) +  \gamma_0  \frac{p}{q}(1-\delta)  + 1-\beta +
\frac{\gamma_1}{r}}
+ C L^{-(1+\alpha)}.
\end{align*}
Note that the power of $L$ in the first term from the right-hand side cannot be smaller than the power of $L$ in the second term. Indeed, since $1 - \dep= \frac{r(q - p)}{q(r - p)}$, $\alpha \geq \beta -1 + \frac{2-\gamma_0}{r}$, $1/r = 1 - p/q$, and $\gamma_0 \leq \gamma_1 + r$, we deduce that
\begin{align}\label{ineq:tildea0}
&(p(1+\alpha - \beta) -2)\left(1 - \dep\right) + \gamma_0 \frac{p}{q}(1-\delta)  + 1-\beta + 
\frac{\gamma_1}{r}
+ 1 + \alpha \notag \\
&\;\;\geq  \frac{p(q - p)}{q(r - p)}(2 - \gamma_0) -2 \frac{r(q-p)}{q(r-p)} + \gamma_0 \frac{p(q - p)}{q(r - p)} + 2 -\beta + 
\frac{\gamma_1}{r}
+ \beta -1 + \frac{2-\gamma_0}{r} \notag\\
&\;\;= \frac{2 (p-r) (q - p)}{q(r - p)} + 1 + \frac{\gamma_1}{r} + 2 \left( 1 - \frac{p}{q}\right) - \frac{\gamma_0}{r}
= 1 +  \frac{\gamma_1}{r}  - \frac{\gamma_0}{r} \geq 0.
\end{align}

Hence,
\begin{align}\label{eq8step3}
\int_{|y| \leq L} |\Theta (y)|^p dy \leq C L^{2-p(1+\alpha -\beta) - d_0},
\end{align}
where
\begin{align}\label{eqa0step3}
d_0 := (2 -p(1+\alpha - \beta))\left(1 - \dep\right) -  \gamma_0 \frac{p}{q}(1-\delta)  - 1 + \beta  
 - \frac{\gamma_1}{r}
> 0.
\end{align}
If $2-p(1+\alpha -\beta) - d_0 < 0$ then \eqref{eq8step3} implies that $\Theta \equiv 0$ in $\R^2$, which yields a contradiction and finishes the proof. Otherwise, for $2 - p (1+\alpha -\beta) - d_0 \geq 0$, we repeat the above argument by using now the improved estimate \eqref{eq8step3}. This gives
\begin{align}\label{eqstep3a}
\int_{|y| \leq L} |\Theta(y)|^{q} dy 
\leq C L^{(2 -p(1+\alpha -\beta))\delta - d_0 \delta + (1-\delta)\gamma_0}
\end{align}
and
\begin{align*}
&\frac{1}{L^{2- p(1 + \alpha - \beta)}} \int_{|y| \leq L} |\Theta (y)|^p dy  \\
&\qquad\leq  C \sum_{k=0}^{\infty} (2^k L)^{p(1 + \alpha - \beta) +\alpha -2} (2^k L)^{ \dep\left(2-p(1+\alpha -\beta)\right) - d_0\dep + \gamma_0 \frac{p}{q}(1-\delta) + 1-\alpha - \beta + \frac{\gamma_1}{r} 
} \\
&\qquad\qquad+ C \sum_{k=0}^{\infty} (2^k L)^{p(1 + \alpha - \beta) -3 -\alpha} (2^k L)^{2 -p(1+\alpha -\beta) - d_0 } \\
&\qquad\leq C \sum_{k=0}^{\infty} (2^k L)^{(p(1+\alpha -\beta) -2)\left(1 - \dep \right) + \gamma_0 \frac{p}{q}(1-\delta) + 1-\beta +
\frac{\gamma_1}{r}
-d_0 \dep}
+ C \sum_{k=0}^{\infty} (2^k L)^{-(1+\alpha) -d_0} \\
&\qquad\leq C L^{- d_0(1 + \dep)} + C L^{-(1+\alpha) -d_0} 
\leq C L^{- d_0(1 + \dep)},
\end{align*}
where we used that $0 < d_0 \leq 1 + \alpha$, according to \eqref{ineq:tildea0}, \eqref{eqa0step3}. Thus,
\begin{align*}
\int_{|y| \leq L} |\Theta (y)|^p dy 
\leq C L^{2- p(1 + \alpha - \beta) - d_0(1 + \dep)}.
\end{align*} 

We may repeat this process for as many $n$ times, $n \in \N$, as necessary, to obtain that
\begin{align}\label{int:Theta:p:3}
\int_{|y| \leq L} |\Theta (y)|^p dy \leq  C L^{2 - p(1+\alpha -\beta) - d_0 (1 +  \dep + \dep^2 + \ldots + \dep^n)}
= C L^{2 - p(1+\alpha -\beta) - d_0  \left( \frac{1 - \dep^{n+1}}{1 - \dep}\right)}.
\end{align}
As before, note that  $ \frac{1 - \dep^{n+1}}{1 - \dep} \to \frac{1}{1 - \dep} = \frac{q(r - p)}{r(q - p)}$ as $n \to \infty$. Additionally, from the definition of $d_0$ in \eqref{eqa0step3} and condition \eqref{cond:gamma0:q}, we have
\begin{align}\label{eq12step3}
&2-p(1+\alpha -\beta) - d_0 \frac{q(r - p)}{r(q - p)} \notag \\
&\qquad=2-p(1+\alpha -\beta) - \frac{q(r - p)}{r(q - p)} \left[ (2 -p(1+\alpha - \beta)) \frac{r(q-p)}{q(r-p)} - \gamma_0 \frac{p(q - p)}{q(r-p)}  - 1 + \beta 
- \frac{\gamma_1}{r}  \right] 
\notag \\
&\qquad= \frac{p}{r}\left[\gamma_0  - \frac{q(r - p)}{p(q - p)} \left( \beta  -1  
- \frac{\gamma_1}{r}
\right)\right] < 0.
\end{align}
Therefore, we may take $n$ sufficiently large for which the power of $L$ in \eqref{int:Theta:p:3} is negative. This implies
that $\Theta \equiv 0$ in $\R^2$, which is a contradiction with our starting assumption that $\Theta \not\equiv 0$ in $\R^2$. This concludes the proof.
\end{case}

\subsection{Proof of \cref{thm:main3}}

We start with the proof of \ref{cor:1:i}. Let $M$ be a positive constant such that 	$|\Theta (y)| \lesssim |y|^{-\sigma_0}$ and $|\nabla \Theta (y)| \lesssim |y|^{-\sigma_1}$ for all $|y| \geq M$. Thus, since $\Theta \in \cC^1(\R^2)$, it follows that for all $L > 0$ and $r > \max \left\{ \frac{2}{\sigma_0}, \frac{2}{\sigma_1} \right\}$, we have
\begin{align*}
\int_{|y| \leq L} |\Theta (y)|^r dy \leq \int_{|y| \leq M} |\Theta (y)|^r dy + \int_{|y| \geq M} \frac{1}{|y|^{r\sigma_0}} dy \leq C,
\end{align*}
and
\begin{align*}
\int_{|y| \leq L} |\nabla \Theta (y)|^r dy \leq \int_{|y| \leq M} |\nabla \Theta (y)|^r dy + \int_{|y| \geq M} \frac{1}{|y|^{r\sigma_1}} dy \leq C.
\end{align*}

Let $p_1 :=  \max \left\{1, \frac{2}{\sigma_0}, \frac{2}{\sigma_1}\right\}$. Then, the assumptions of \cref{thm:main} are satisfied with the following parameter choices: $p = p_1$, $r = p_1 +1$, $\gamma_0 = 0$, and $ \gamma_1 =0$. Consequently, the values of $\alpha$ that admit a nontrivial corresponding profile $\Theta$ belong to the interval $\left[\beta -1 + \frac{2}{p_1+1}, \beta -1 + \frac{2}{p_1} \right]$. On the other hand, note that the assumptions of \cref{thm:main} are also satisfied with $p = p_1 + k$, $r = p_1 + k+1$, $\gamma_0 = 0$, and $ \gamma_1 =0$, for any $k > 0$. This implies that the values of $\alpha$ admitting nontrivial profiles must also belong to the interval $\left[\beta -1 + \frac{2}{p_1+k + 1}, \beta -1 + \frac{2}{p_1 + k} \right]$ for any $k >0$. In particular, for $k \geq 2$, we obtain that $\alpha \in \left[\beta -1 + \frac{2}{p_1 +k+1}, \beta -1 + \frac{2}{p_1 + k} \right] \cap \left[\beta -1 + \frac{2}{p_1 + 1}, \beta -1 + \frac{2}{p_1}\right] = \emptyset$. Therefore, we conclude that $\Theta \equiv 0$ in $\R^2$.

We proceed to prove \ref{cor:1:ii}. Let $M > 0$ such that $ |\nabla \Theta (y)| \lesssim |y|^{\sigma_1}$ for all $|y| \geq M$, and fix any $p \in [1,\infty)$. Choose  $r\geq p+1$ sufficiently large such that $r > (2 + \beta p)/(\beta - 1 - \sigma_1)$.
Observe that, for all $L \gg 1$,
\begin{align}\label{int:DTheta:Lr}
\int_{|y| \leq L} |\nabla \Theta (y)|^r dy &\leq \int_{|y| \leq M} |\nabla \Theta (y)|^r dy + \int_{M \leq |y| \leq L} |y|^{\sigma_1 r} dy \notag \\
&\leq C \int_{|y| \leq M} dy + L^{\sigma_1 r} \int_{M \leq |y| \leq L} dy
\leq C L^{\sigma_1r + 2},
\end{align}
where we used that $\Theta \in \cC^1(\R^2)$. Then, it follows by Sobolev embedding that
\begin{align*}
\int_{|y| \leq L} |\Theta (y)|^r dy \leq C L^{(\sigma_1 + 1) r +2}.
\end{align*}

Therefore, the assumptions of \cref{thm:main} are satisfied by setting $\gamma_1 =\sigma_1r + 2$ and $\gamma_0 = (\sigma_1 + 1)r + 2 = \gamma_1 + r$. It follows that the values of $\alpha$ admitting nontrivial profiles belong to the interval $\left[\beta - 2 -\sigma_1, \beta -1 + \frac{2}{p} \right]$ and the corresponding profile satisfies
\begin{align}\label{eq5teo2}
C_1 L^{2-p(1+\alpha -\beta)} \leq \int_{|y| \leq L}  |\Theta(y)|^p dy \leq  C_2 L^{2-p(1+\alpha -\beta)} \quad \mbox{for all } L \gg 1, 
\end{align}
for some positive constants $C_1,C_2$. On the other hand, since $|\Theta (y)| \gtrsim 1$ for $|y| \gg 1$, it follows that
\begin{align}\label{eq6teo2}
\int_{|y| \leq L} |\Theta(y)|^p dy  \geq CL^{2} \quad \mbox{ for } L \gg 1.
\end{align}
Combining the upper bound in \eqref{eq5teo2} with \eqref{eq6teo2}, we must have $1 + \alpha -\beta \leq 0$, which implies that the values of $\alpha$ admitting nontrivial profiles in fact belong to $[\beta -2 - \sigma_1 ,\beta -1]$. This completes the proof.

\subsection{Proof of \cref{thm:main2}}

The proof of \cref{thm:main2} follows similar ideas from the proof of \cref{thm:main}. Thus, we will keep the presentation here shorter and refer to the analogous calculations above as needed, while expanding on the main differences. As in \cref{thm:main}, the proof is divided into three different cases, each corresponding to a particular range for $\alpha$ within the interval $(-1, \infty)$. Without loss of generality, we assume again that $x_0 = 0$.

The first case is when $\alpha > \beta+\frac{2}{p} -1$, and the proof is identical to \cref{case1} above. The second case is when $-1 < \alpha < \beta -1 + \frac{2-\gamma}{r}$, and as in \cref{case2} we will start from the local $L^p$ equality \eqref{eq2} and estimate each of its terms. The difference lies at the estimate of $\bu$ due to better integrability properties of the kernel $K_\beta$ when $\beta \in (0,1]$, in comparison to when $\beta \in (1,2)$. Namely, as pointed out in \cref{sec:intro}, $K_\beta$ is integrable near the origin for $0< \beta <1$, and is a Calder\'on-Zygmund operator for $\beta = 1$. Then, taking as before a cut-off function $\phi_\rho \in \cC^\infty(\R^2)$ with $0 \leq \phi_\rho \leq 1$, $\phi_\rho \equiv 1$ in $B_{\rho/2}(0)$, and $\phi_\rho \equiv 0$ in $B_\rho^c(0)$, we proceed similarly as in \cite[Section 2]{Xue16} by decomposing $\bu$ into a term involving the self-similarity region and another one outside of it:
\begin{align}\label{eq0-thm2}
\bu(x,t)
&= C_\beta P.V. \int_{\R^2} K_\beta (x-y) \theta(y,t)\phi_\rho (y) dy +   C_\beta P.V. \int_{\R^2} K_\beta (x-y) \theta(y,t)(1 - \phi_\rho(y)) dy \notag \\
&=: {\tbu}^{(1)}(x,t) + {\tbu}^{(2)}(x,t). 
\end{align}

By the local self-similarity of $\theta$, analogously to \eqref{eq4}, we can rewrite ${\tbu}^{(1)}$ as
\begin{align*}
{\tbu}^{(1)}(x,t) = \frac{C_\beta}{(T-t)^{\frac{\alpha}{1 + \alpha}}} U^{(1)}\bigg( \frac{x}{(T-t)^{\frac{1}{1 + \alpha}}}, t\bigg),
\end{align*}
where
\begin{align*}
U^{(1)}(x,t) := P.V. \int_{\R^2}  K_\beta(x-y) \Theta(y) \phi_{\rho}(y(T-t)^{\frac{1}{1+\alpha}}) dy. 
\end{align*}
Note that  ${\tbu}^{(2)}$ has the same expression as $\bu^{(3)}$ defined in \eqref{eq3}. Since $K_\beta$ is square integrable in any region that does not contain the origin,  for any $\beta>0$, then, for $\rho/8\leq |x|\leq \rho/4$, we obtain as in \eqref{eq6} that
\begin{align}\label{u2-beta01}
|\tbu^{(2)}(x,t)| \leq  C_\beta \|\theta(0)\|_{L^2}.
\end{align}
Proceeding with an analogous computation as in \eqref{bb}-\eqref{eq7}, it follows that the right-hand side of \eqref{eq2} can be estimated by
\begin{align}\label{eq7_b01}
&\bigg| \int_{t_1}^{t_2}  \int_{\R^2}  |\theta(x,t)|^p  (\bu(x,t) \cdot \nabla \phi_{\frac{\rho}{4}}(x))  dx \, dt \bigg| \notag \\
&\qquad\leq C \int_{\frac{\rho}{8} l_1 \leq |y| \leq \frac{\rho}{4}l_2} \frac{|\tU (y)| | \Theta(y) |^p}{|y|^{2 -\alpha - p(1+\alpha -\beta)}} dy  + C \int_{\frac{\rho}{8} l_1 \leq |y| \leq \frac{\rho}{4}l_2} \frac{|\Theta(y)|^p}{|y|^{3 + \alpha -p(1+\alpha -\beta)}} dy ,
\end{align}
where
\begin{equation}
\tU(y) =  \int_{t_1}^{t_2}  \left|P.V. \int_{\R^2} K_\beta(y-z) \Theta(z) \phi_{\rho}(z(T-t)^{\frac{1}{1+\alpha}}) dz \right|  \ind_{\Ay}(t)dt,\notag
\end{equation}
and $\ind_{\Ay}$ is the indicator function of the set $\Ay$ defined in \eqref{ball-defi}. Plugging this back into \eqref{eq2} and recalling that
\begin{align}
\int_{\R^2} |\theta(x, t_i)|^p \phi_{\frac{\rho}{4}} (x) dx 
&= l_i^{p(1 + \alpha - \beta) -2} \int_{|y| \leq \frac{\rho}{4}l_i} |\Theta (y)|^p \phi_{\frac{\rho}{4}} (y l_{i}^{-1}) dy, \notag
\end{align}
where $l_i = (T-t_i)^{- \frac{1}{1+\alpha}}$, $i =1, 2$, we obtain that 
\begin{multline}\label{maininequality-thm2-b01} 
\bigg| l_2^{p(1 + \alpha - \beta) -2} \int_{\R^2} |\Theta (y)|^p \phi_{\frac{\rho}{4}} (y l_{2}^{-1}) dy - l_1^{p(1 + \alpha - \beta) -2} \int_{\R^2} |\Theta (y)|^p \phi_{\frac{\rho}{4}} (y l_{1}^{-1}) dy \bigg| \\
\leq C \int_{\frac{\rho}{8} l_1 \leq |y| \leq \frac{\rho}{4}l_2} \frac{|\tU(y)| | \Theta(y) |^p}{|y|^{2 -\alpha - p(1+\alpha -\beta)}} dy  + C \int_{\frac{\rho}{8} l_1 \leq |y| \leq \frac{\rho}{4}l_2} \frac{|\Theta(y)|^p}{|y|^{3 + \alpha -p(1+\alpha -\beta)}} dy.
\end{multline}

Reproducing the same steps as in \eqref{limitL2}-\eqref{maininequality2}, we obtain 
\begin{align}\label{maininequality2-thm2}
\frac{1}{L^{2- p(1 + \alpha - \beta)}} \int_{|y| \leq L} |\Theta (y)|^p dy  
\leq C \int_{|y| \geq L} \frac{|\tU (y)| | \Theta(y) |^p}{|y|^{2 -\alpha - p(1+\alpha -\beta)}} dy  + C \int_{|y| \geq L} \frac{|\Theta(y)|^p}{|y|^{3 + \alpha -p(1+\alpha -\beta)}} dy.
\end{align}
Next, proceeding analogously to \eqref{eq9}-\eqref{eq10} and invoking the upper estimate on the $L^r$ norm of $\tU$ on a dyadic shell provided by \cref{lemmamain2}, we get that for $L$ sufficiently large

\begin{align}\label{eqa0}
\int_{|y| \leq L} |\Theta (y)|^p dy  &\leq  C L^{\frac{(p+1)(\gamma-2)}{r} -\beta +2} + C L^{1 -\alpha + \frac{(\gamma -2)p}{r}} \leq C L^{\ta_0},
\end{align}
where
\begin{align}\label{def:a0:thm2}
\ta_0:= 1 - \alpha + \frac{p(\gamma -2)}{r}.
\end{align}
Clearly, if $\ta_0 <0$ then $\Theta \equiv 0$ in $\R^2$ and the proof is finished. Otherwise, if $\ta_0 \geq 0$, we use the new upper bound \eqref{eqa0} to improve the estimates of the terms on the right-hand side of \eqref{maininequality2-thm2}. 

By interpolation as in \eqref{eq11} with $q = p+1$, we have
\begin{align}\label{interpolation-thm2}
\int_{|y| \leq L} |\Theta(y)|^{p+1} dy 
&\leq C L^{\ta_0\delta + (1-\delta)\gamma}, \quad  \textrm{with }  \delta := \frac{r-p-1}{r-p} \in [0,1). 
\end{align}

Next, by applying the dyadic decomposition once again and also H\"older's inequality, we obtain from \eqref{def:a0:thm2}, \eqref{interpolation-thm2}, and \cref{lemmamain2} that the first term in the right-hand side of \eqref{maininequality2-thm2} can be estimated as
\begin{align}\label{eq3-thm2}
&\int_{|y| \geq L} \frac{|\tU (y)| | \Theta(y) |^p}{|y|^{2 -\alpha - p(1+\alpha -\beta)}} dy \notag \\
&\leq  \sum_{k=0}^{\infty} \frac{1}{(2^k L)^{2 - \alpha - p(1 + \alpha - \beta)}} \left( \int_{|y| \sim 2^kL} |\Theta (y)|^{p+1} dy \right)^{\frac{p}{p+1}} \left( \int_{|y| \sim 2^kL} |\tU (y)|^{p+1} dy \right)^{\frac{1}{p+1}}
\notag \\
& \leq  \sum_{k=0}^{\infty} \frac{1}{(2^k L)^{2 - \alpha - p(1 + \alpha - \beta)}} (2^k L)^{\frac{p}{p+1}(\Tilde{a}_0 \delta + \gamma(1-\delta))} (2^k L)^{\frac{1}{p+1}(\Tilde{a}_0 \delta + \gamma(1-\delta) - (p+1)(\alpha + \beta))}  
\notag \\
&\leq \sum_{k=0}^{\infty} (2^k L)^{p(1+\alpha - \beta) -2 + \alpha + \tilde{a}_0 \delta + \gamma (1-\delta)  - \alpha - \beta} 
\notag \\
&\leq \sum_{k=0}^{\infty} (2^k L)^{p(1 +\alpha -\beta) -2 + \tilde{a}_0
+ (\gamma - \tilde{a}_0)(1 - \delta) - \beta }	
\notag \\
&\leq \sum_{k=0}^{\infty} (2^k L)^{p(1+\alpha -\beta) -2 +\Tilde{a}_0 - \Tilde{a}_1},
\end{align}
where
\begin{align}\label{a1-thm2}
\tilde{a}_1:=  (\tilde{a}_0 - \gamma)(1 - \delta) + \beta.
\end{align}
Here we note a relevant difference between the estimate in \eqref{eq3-thm2} and the analogous one done for the case $1<\beta<2$ in \cref{thm:main}, namely \eqref{ff}. Specifically, since \cref{lemmamain2} only requires an assumption on $\Theta$, when invoking H\"older's inequality we can apply the $L^q$ norm with $q = p+1$ on $\tU$ and thus also make use of the new bound \eqref{eqa0} in estimating this factor via \eqref{interpolation-thm2} and \eqref{firstlemmahypothesis}-\eqref{lemma2-betamenor1}, in addition to the first factor involving the $L^q = L^{p+1}$ norm of $\Theta$. By contrast, in \eqref{ff} we apply the $L^r$ norm on $\vt$ rather than $L^q$ with $q = rp/(r-1)$ since the assumption in \cref{lemmamain} involves $\nabla \Theta$, for which we obtain no further estimate besides the one provided in assumption \eqref{profile-hipothesis-derivada}. Clearly, a similar difference in the estimates applies throughout the remaining of this bootstrapping procedure. 

Now observe that since $-1<\alpha<\beta-1+\frac{2-\gamma}{r}$ and $\tilde{a}_0= 1-\alpha+\frac{p(\gamma-2)}{r}$, then $ \tilde{a}_0 + p(1+\alpha - \beta) -2 <0$. Thus, to ensure that the sum in \eqref{eq3-thm2} is finite, it suffices to prove that $\ta_1 \geq 0$. Recalling that $1 -\delta = \frac{1}{r-p}$ and invoking once again the assumption that $\alpha < \beta -1 + \frac{2-\gamma}{r}$, we obtain
\begin{align}\label{a1-positive-thm2}
\ta_1 &= (\tilde{a}_0 - \gamma)(1 - \delta) + \beta \notag\\
&= \left( 1 - \alpha + \frac{p (\gamma - 2)}{r} - \gamma \right) \frac{1}{r - p} + \beta \notag\\
&> \left( 1 - \beta + 1 - \frac{(2-\gamma)}{r} + \frac{p (\gamma - 2)}{r} - \gamma \right) \frac{1}{r - p} + \beta \notag\\
&= \left( \frac{(p+1 - r)(\gamma - 2)}{r} - \beta \right) \frac{1}{r - p} + \beta \notag\\
&= \frac{r - p - 1}{r - p} \left( \frac{2 - \gamma}{r} + \beta \right) \geq 0,
\end{align}
where in the last inequality we used the assumptions that $r \geq p+1$ and $\gamma < \beta (r - p) < \beta r + 2$. Therefore, we conclude from \eqref{eq3-thm2} that
\begin{align}\label{eq4-thm2}
&\int_{|y| \geq L} \frac{|\tU (y)| | \Theta(y) |^p}{|y|^{2 -\alpha - p(1+\alpha -\beta)}} dy \leq CL^{p(1+\alpha -\beta) -2 +\tilde{a}_0 - \tilde{a}_1}.
\end{align}
For the second term in the right-hand side of \eqref{maininequality2-thm2}, invoking \eqref{eqa0} and recalling that  $\tilde{a}_0 +p(1+\alpha - \beta) -2 <0$ and $-1 < \alpha$, we estimate
\begin{align}\label{eq5-thm2}
\int_{|y| \geq L} \frac{|\Theta (y)|^p}{|y|^{3 + \alpha - p(1+\alpha - \beta)}} dy 
&\leq  \sum_{k=0}^{\infty} \frac{1}{(2^k L)^{3 +\alpha - p(1 + \alpha - \beta)}} \int_{|y| \sim 2^kL} |\Theta(y)|^p dy \notag \\
&\leq C \sum_{k=0}^{\infty} \frac{1}{(2^k L)^{3 +\alpha - p(1 + \alpha - \beta)}} (2^kL)^{\tilde{a}_0} \notag \\
&\leq C L^{p(1+ \alpha - \beta) -2 + \Tilde{a}_0 -(1+\alpha)}.
\end{align}
Plugging  \eqref{eq4-thm2} and \eqref{eq5-thm2} into \eqref{maininequality2-thm2}, we deduce that
\begin{align}\label{b_0-them2}
\int_{|y| \leq L} |\Theta (y)|^p dy 
&\leq C L^{\ta_0 - \tb_0}, \quad \textrm{where }  \tb_0 :=\min\{\ta_1, 1 +\alpha\} > 0,
\end{align}
with $\ta_1 > 0$ as given in \eqref{a1-thm2}.

Now, analogously to \eqref{profileestimates2}, let us obtain a more general form of an estimate of the profile to help us proceed with the bootstrapping strategy. Namely, suppose that 
\begin{align}
\int_{|y| \leq L} |\Theta (y)|^p dy  \leq C L^\sigma, \quad \mbox{with } \sigma \leq \ta_0. \notag
\end{align}
Then, proceeding similarly as in \eqref{interpolation-thm2}-\eqref{eq3-thm2} and \eqref{eq5-thm2}, cf. \eqref{eq15}-\eqref{profileestimates2},
it is not difficult to arrive at
\begin{align}\label{profileestimates2-thm2}
\int_{|y| \leq L} |\Theta (y)|^p dy  
\leq  C L^{\ta_0 -\ta_1 + \left( \sigma - \ta_0 \right) \delta} + C L^{\sigma -(1 + \alpha)}.
\end{align}
Hence,
\begin{empheq}[left=\displaystyle{ \int_{|y| \leq L} |\Theta(y)|^p dy \leq} \empheqlbrace]{alignat=2}
& C L^{\sigma - (1 + \alpha)} & \quad \mbox{ if }\,\, \ta_1 - (1 + \alpha) \geq (\ta_0 - \sigma) (1 - \delta), \label{ineq:sigma:1-thm2} \\
& C L^{\ta_0 - \ta_1 + (\sigma - \ta_0) \delta} & \quad \mbox{ if } \,\,\ta_1 - (1 + \alpha) < (\ta_0 - \sigma) (1 - \delta). \label{ineq:sigma:2-thm2}
\end{empheq}

Next, we specialize this estimate to the case $\sigma = \ta_0 - \tb_0$, in view of \eqref{b_0-them2}. Firstly, suppose $\tb_0 = \ta_1$, so that $\ta_1 \leq 1 + \alpha$. Then, invoking \eqref{profileestimates2-thm2} with $\sigma = \ta_0 - \ta_1$ yields
\begin{align}\label{b_0=1+alpha}
\int_{|y| \leq L} |\Theta (y)|^p dy  
\leq  C L^{\ta_0 - \ta_1 (1 + \delta)} + C L^{\ta_0 - \ta_1 - (1 + \alpha)}.
\end{align}
Since $\ta_1 \geq 1 + \alpha > 0$, and $\delta \in [0,1)$, then $\ta_1 \delta < \ta_1 \leq 1 + \alpha$.
It thus follows that
\begin{align*}
\int_{|y| \leq L} |\Theta (y)|^p dy  
\leq  C L^{\ta_0 - \ta_1 (1 + \delta)}.
\end{align*}
We now proceed similarly as in the case $b_0 = a_1$ in the proof of \cref{thm:main}. Namely, we may repeat this process $n$ times, for any given $n \in \N$, where at each $k$th time, $k \in \{1,\ldots, n\}$, we set $\sigma = \ta_0 - \ta_1 (1 + \delta + \ldots + \delta^k)$, and arrive at
\begin{align}\label{ineq:Theta:p:4}
\int_{|y| \leq L} |\Theta (y)|^p dy  
&\leq  C L^{\ta_0 - \ta_1 (1 + \delta + \ldots + \delta^n)}. 
\end{align}
Note that $1 + \delta + \ldots + \delta^n \rightarrow \frac{1}{1 - \delta} = r- p$ as $n \to \infty$. Moreover, recalling that $\ta_1 = (\ta_0 - \gamma)(1 - \delta) + \beta$, we have
\begin{align}\label{eq_seq_delta}
\ta_0 - \ta_1 (r - p) &= \ta_0 - (\ta_0 - \gamma)(1 - \delta) (r - p) - \beta (r -p) \notag\\
&= \gamma - \beta (r - p) < 0,
\end{align}
where we used the assumption that $\gamma < \beta (r - p)$. Hence, it follows that there exists $n$ sufficiently large such that $\ta_0 - \ta_1 (1 + \delta + \ldots + \delta^n) < 0$. For such $n$, we thus conclude from \eqref{ineq:Theta:p:4} that $\Theta \equiv 0$ in $\R^2$.

For the other case, $\tb_0 = 1 + \alpha$, where $1 + \alpha \leq \ta_1$, we proceed similarly to the case $b_0 = 1+\alpha$ in the proof of \cref{thm:main}. We invoke \eqref{profileestimates2-thm2} with $\sigma = \ta_0 - (1 + \alpha)$ to obtain
\begin{empheq}[left=\displaystyle{ \int_{|y| \leq L} |\Theta(y)|^p dy \leq} \empheqlbrace]{alignat=2}
& C L^{\ta_0 - 2 (1 + \alpha)} & \quad \mbox{ if }\,\, \ta_1 - (1 + \alpha) \geq (1+ \alpha) (1-\delta), \label{ineq:m0:1-thm2} \\
& C L^{\ta_0 - \ta_1 - (1+ \alpha)\delta} & \quad \mbox{ if } \,\,\ta_1 - (1 + \alpha) < (1 + \alpha) (1-\delta). \label{ineq:m0:2-thm2}
\end{empheq}
The proof is over if the powers of $L$ in both \eqref{ineq:m0:1-thm2} and \eqref{ineq:m0:2-thm2} are negative. Otherwise, we may repeat the same process as in \eqref{ineq:m0}-\eqref{ineq:bn} and obtain, for each $n \in \N$, a number $\tb_{n+1}$ such that
\begin{align*}
\int_{|y| \leq L} |\Theta (y)|^p dy \leq  L^{\ta_0 - \tb_{n+1}},
\end{align*}
and with
\begin{align*}
\tb_{n+1} \geq \ta_1 (1 + \delta + \ldots + \delta^n).
\end{align*}

Therefore, by proceeding similarly to \eqref{ineq:Theta:p:4} and \eqref{eq_seq_delta}, we infer that 
there exists $n \in \N$ sufficiently large such that $\ta_0 - \ta_1 (1 + \delta + \ldots + \delta^n) < 0$, which implies that $\Theta \equiv 0$ and concludes the proof of this case.

Finally, for the case $\beta -1 + \frac{2-\gamma}{r} \leq \alpha \leq \beta -1 + \frac{2}{p}$ we replicate the argument by contradiction employed in \cref{case3}. Namely, we assume that $\Theta\not\equiv 0$ and, given that the upper bound in \eqref{growth-profile-thm2} is valid, we assume that the lower bound in \eqref{growth-profile-thm2} does not hold. Then, analogously to \eqref{maininequalitystep3}, we obtain

\begin{align}\label{maininequality-thm2-case3} 
\frac{1}{L^{2- p(1 + \alpha - \beta)}} \int_{|y| \leq L} |\Theta (y)|^p dy
\leq C \int_{|y| \geq L} \frac{|\tU(y)| | \Theta(y) |^p}{|y|^{2 -\alpha - p(1+\alpha -\beta)}} dy  + C \int_{|y|\geq L } \frac{|\Theta(y)|^p}{|y|^{3 + \alpha -p(1+\alpha -\beta)}} dy.
\end{align}

This estimate will be repeatedly used in conjunction with  \eqref{interpolation-thm2} and \cref{lemmamain2} to improve the upper bound of $ \int_{|y| \leq L} |\Theta (y)|^p dy$. To present this in a more organized manner, assume that
\begin{align}
\int_{|y| \leq L} |\Theta (y)|^p dy  \leq C L^\sigma \quad \mbox{with } \sigma \leq 2-p(1+\alpha-\beta). \notag 
\end{align}
Then,  based on estimates similar to \eqref{interpolation-thm2}-\eqref{eq5-thm2}, cf. \eqref{eq15}-\eqref{eq17}, we obtain from \eqref{maininequality-thm2-case3} the following general estimate
\begin{align}\label{general-est}
\int_{|y| \leq L} |\Theta (y)|^p dy \leq C L^{D - \td_0+(\sigma-D) \delta}+CL^{\sigma -(1+\alpha)},
\end{align}
where 
\begin{align}\label{eq_D}
D:=2 -p(1+\alpha - \beta) \mbox{ and } \td_0 := (D - \gamma)(1 - \delta) + \beta
> 0.
\end{align}

In view of \eqref{eq1step2}, we first apply \eqref{general-est} with $\sigma=D$, and arrive at
\begin{align}\label{eq15-thm2}
\int_{|y| \leq L} |\Theta (y)|^p dy \leq C L^{D- \td_0}+CL^{D -(1+\alpha)}.
\end{align}

From \eqref{eq_D} and the assumption that $\alpha \geq \beta -1 + \frac{2-\gamma}{r}$, it follows that $\tilde{d}_0 \leq 1 + \alpha$. Thus,
\begin{align*}
\int_{|y| \leq L} |\Theta (y)|^p dy \leq C L^{D - \tilde{d}_0}.
\end{align*}

Next, applying \eqref{general-est} with $\sigma=D - \tilde{d}_0 $, we obtain that
\begin{align}\label{eq16a-thm2}
\int_{|y| \leq L} |\Theta (y)|^p dy \leq C L^{D- \td_0(1+ \delta)}+CL^{D - \tilde{d}_0  -(1+\alpha)}.
\end{align}
Since $\delta \in [0,1)$ and $\tilde{d}_0 > 0$, then $\tilde{d}_0 \delta < \tilde{d}_0 \leq 1+\alpha$, and consequently
\begin{align*}
\int_{|y| \leq L} |\Theta (y)|^p dy \leq C L^{D - \tilde{d}_0 (1+\delta)}.
\end{align*}
Hence, repeating this process $n$ times, for any given $n \in \N$, we arrive at
\begin{align}\label{eq_final_thm2_new}
\int_{|y| \leq L} |\Theta (y)|^p dy \leq C L^{D - \tilde{d}_0 (1+\delta + \ldots + \delta^n)}.
\end{align}
Since $1 + \delta + \ldots + \delta^n \rightarrow \frac{1}{1 -\delta} = r -p$ as $n \to \infty$ and $\gamma < \beta(r-p)$, we obtain that
\begin{align*}
D - \tilde{d}_0(r-p) = D - [(D -\gamma)(1-\delta) + \beta](r-p) = \gamma - \beta(r-p) < 0.
\end{align*}
Therefore, there exists $n$ sufficiently large such that the power of $L$ in \eqref{eq_final_thm2_new} is negative. This implies
that $\Theta \equiv 0$ in $\R^2$, which is a contradiction with our initial assumption that $\Theta \not\equiv 0$ in $\R^2$. This concludes the proof.

\subsection{Proof of \cref{thm:main4}}

We follow a similar proof as in \cref{thm:main3} and make the appropriate modifications. As such, to prove \ref{cor:2:i}, we first take $M$ to be a positive constant such that $|\Theta (y)| \lesssim |y|^{-\sigma}$ for all $|y| \geq M$. Since $\Theta \in \mathcal{C}^{\beta} (\R^2)$, we obtain that for all $L > 0$ and $r > \frac{2}{\sigma}$ 
\begin{align*}
\int_{|y| \leq L} |\Theta (y)|^r dy \leq \int_{|y| \leq M} |\Theta (y)|^r dy + \int_{|y| \geq M} \frac{1}{|y|^{r\sigma}} dy \leq C.
\end{align*}
Then, denoting $p_1:=  \max \left\{1, \frac{2}{\sigma}\right\}$, it follows that the assumptions of \cref{thm:main2} are satisfied with $p = p_1$, $r = p_1 + 1$ and $\gamma =0$. As a consequence, the values of $\alpha$ admitting nontrivial profiles $\Theta$ must belong to the interval $\left[\beta -1 + \frac{2}{p_1+1}, \beta -1 + \frac{2}{p_1}\right]$. Moreover, since the assumptions of \cref{thm:main2} are also verified with $p = p_1 + k$, $r= p_1+ k+1$ and $\gamma =0$, for any $k >0$, then such $\alpha$ must also belong to $\left[\beta -1 + \frac{2}{p_1+k + 1}, \beta -1 + \frac{2}{p_1 + k}\right]$ for any $k >0$. Taking $k \geq 2$, it follows that $\alpha \in \left[\beta -1 + \frac{2}{p_1 +k+1}, \beta -1 + \frac{2}{p_1 + k} \right] \cap \left[\beta -1 + \frac{2}{p_1 + 1}, \beta -1 + \frac{2}{p_1}\right] = \emptyset$, and we deduce that  $\Theta \equiv 0$ in $\R^2$, as desired.

Regarding item \ref{cor:2:ii}, let us now consider $M > 0$ such that $|\Theta (y)| \lesssim |y|^{\sigma}$ for all $|y| \geq M$, and fix an arbitrary $p \in [1, \infty)$ and $r \geq p+ 1$. Similarly as in \eqref{int:DTheta:Lr}, we have that for $L \gg 1$
\begin{align}\label{eq3teo2}
\int_{|y| \leq L} |\Theta (y)|^r dy \leq \int_{|y| \leq M} |\Theta (y)|^r dy + \int_{M \leq |y| \leq L} |y|^{r\sigma} dy \leq C L^{\sigma r +2},
\end{align}
where we again used the fact that $\Theta$ is a continuous function in $\R^2$ to bound the first integral.
Choosing $r$ sufficiently large such that $r > (2+ \beta p)/(\beta - \sigma)$, so that $\sigma r + 2 < \beta (r - p)$, it follows that the assumptions of \cref{thm:main2} are satisfied with any fixed $p \geq 1$, such choice of $r$, and $\gamma = \sigma r + 2$.	
Thus, the values of $\alpha$ admitting nontrivial profiles $\Theta$ in this case belong to the interval $[\beta - 1 - \sigma, \beta - 1 + \frac{2}{p}]$, and the corresponding nontrivial $\Theta$ satisfies \eqref{growth-profile-thm2}. But since $|\Theta(y)| \gtrsim 1$ for $|y| \gg 1$, then we may argue as in \eqref{eq6teo2} to deduce that in fact $\alpha \in [\beta - 1 - \sigma, \beta - 1]$, which concludes the proof.

\section{Auxiliary Results}\label{appendix}

In the following result, \cref{lemmamain}, we prove the upper bound for the function $\vt$ given in \eqref{vt} that was used at various steps in the proof of \cref{thm:main} regarding the case $\beta \in (1,2)$. Specifically, this lemma shows that the growth assumption \eqref{profile-hipothesis-derivada} on the $L^r$ norm of the gradient of the profile, $\nabla\Theta$, yields an upper bound on the $L^r$ norm of $\vt$ over a certain annulus in $\R^2$. The subsequent \cref{lemmamain2} shows an analogous result in the case $\beta \in (0,1]$ by relying instead on assumption \eqref{profile-hipothesis-function} on $\Theta$, which was used in the proof of \cref{thm:main2}.

Let us recall the definition of the set $\Ay$ given in \eqref{ball-defi}, namely
\begin{align}\label{ball_apendix}
    \Ay &:= \left\{ t \in [t_1,t_2] \,:\, \frac{\rho}{8} \frac{1}{|y|} \leq (T -t)^{\frac{1}{1+\alpha}} \leq \frac{\rho}{4} \frac{1}{|y|}\right\}
    \notag\\
    &= \left\{ t \in [t_1,t_2] \,:\,   T - \frac{(\rho/4)^{1 + \alpha}}{|y|^{1+ \alpha}} \leq t \leq T - \frac{(\rho/8)^{1 + \alpha}}{|y|^{1+ \alpha}} \right\},
\end{align}
for fixed $y \in \R^2 \setminus \{0\}$ and $0<t_1<t_2<T$.

\begin{lemma}\label{lemmamain}
	Let $\beta \in (1,2)$ and $\Theta \in \mathcal{C}^{1}(\R^2)$.  Suppose that for some $r\in [1,\infty)$ and $\gamma \in \mathbb R$, it holds
	\begin{equation}\label{secondlemmahypothesis}
		\int_{|y| \leq L} |\nabla \Theta|^r dy \lesssim L^\gamma \quad \textrm{for all } L \gg 1.
	\end{equation}
	Then, the function $\vt$ defined by
	\begin{align}\label{def:vt}
		\vt(y) = \int_{t_1}^{t_2} \bigg| \int_{\R^2} 
		\frac{1}{|y -z|^{\beta}} \nabla^\perp \Theta (z) \phi_\rho (z(T-t)^{\frac{1}{1+\alpha}}) dz \bigg| \ind_{\Ay}(t) dt,
	\end{align}
	with $0<t_1<t_2<T$ and $\Ay$ as given in \eqref{ball_apendix}, satisfies the following estimate
	\begin{equation}\label{lemma2}
		\int_{L \leq |y| \leq 2L} |\vt(y)|^r dy \lesssim L^{\gamma + r(1 -\alpha -\beta)} \quad \textrm{for all } L \gg 1.
	\end{equation}
\end{lemma}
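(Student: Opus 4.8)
The plan is to estimate the convolution $\int_{\R^2}|y-z|^{-\beta}\nabla^\perp\Theta(z)\,\phi_\rho(z(T-t)^{1/(1+\alpha)})\,dz$ pointwise in $y$ for $|y|\sim L$, using only the support of the cut-off and the growth hypothesis \eqref{secondlemmahypothesis} on $\nabla\Theta$, and then integrate in $t$ over $\Ay$ using the length bound $|\Ay|\lesssim |y|^{-(1+\alpha)}$ established in the proof of \cref{case2}. First I would fix $y$ with $L\le |y|\le 2L$ and $t\in\Ay$, and observe that the factor $\phi_\rho(z(T-t)^{1/(1+\alpha)})$ restricts the $z$-integral to $|z|\le \rho (T-t)^{-1/(1+\alpha)}\le 8|y|/\rho \cdot \rho \lesssim |y|\sim L$; more precisely, on $\Ay$ one has $(T-t)^{-1/(1+\alpha)}\le 8|y|/\rho$, so the integration region is contained in a ball $B_{CL}(0)$ with $C$ depending only on $\rho$. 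So the task reduces to bounding $\int_{|z|\le CL}|y-z|^{-\beta}|\nabla\Theta(z)|\,dz$ uniformly for $|y|\sim L$.

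For that pointwise bound I would split the $z$-integral into the near region $|y-z|\le L$ (say) and the far region $|y-z|>L$ within $|z|\le CL$. On the far region, $|y-z|^{-\beta}\le L^{-\beta}$, so that piece is at most $L^{-\beta}\int_{|z|\le CL}|\nabla\Theta(z)|\,dz\lesssim L^{-\beta} L^{2(1-1/r)}(\int_{|z|\le CL}|\nabla\Theta|^r)^{1/r}\lesssim L^{-\beta+2-2/r+\gamma/r}$ by Hölder and \eqref{secondlemmahypothesis}. On the near region, I would further dyadically decompose the annuli $2^{-j}L<|y-z|\le 2^{-j+1}L$, $j\ge 1$; on each such annulus $|y-z|^{-\beta}\sim (2^{-j}L)^{-\beta}$ and the annulus has area $\sim(2^{-j}L)^2$, so by Hölder the $j$th contribution is $\lesssim (2^{-j}L)^{-\beta}(2^{-j}L)^{2(1-1/r)}(\int_{|z|\le CL}|\nabla\Theta|^r\,dz)^{1/r}\lesssim (2^{-j}L)^{2-\beta-2/r}\,L^{\gamma/r}$. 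Since $\beta\in(1,2)$ we may, choosing $r$ large enough is not available here (the lemma must hold for all $r\in[1,\infty)$), so I would instead keep the exponent $2-\beta-2/r$ as is: when it is positive the geometric sum in $j$ is dominated by its first term $j=1$ and gives $\lesssim L^{2-\beta-2/r+\gamma/r}$; when it is nonpositive the sum is dominated by the smallest scale, but the smallest scale is bounded below because $\Theta\in\mathcal C^1$ so $\nabla\Theta$ is bounded on any fixed compact set, hence near $z=y$ we instead use $|\nabla\Theta(z)|\le\|\nabla\Theta\|_{L^\infty(B_{CL})}$ — but $\|\nabla\Theta\|_{L^\infty}$ is not controlled by the hypothesis. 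The cleaner route, which I would actually take, is to note that $|y-z|^{-\beta}$ is locally integrable since $\beta<2$, so $\|\,|y-\cdot|^{-\beta}\,\|_{L^{r'}(B_{CL}(y))}\lesssim L^{2/r'-\beta}=L^{2-2/r-\beta}$ provided $r'\beta<2$, and when $r'\beta\ge2$ we split off a unit ball around $y$ on which $|\nabla\Theta|$ is merely integrable... This is the delicate point; see below.

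Cleaner and uniform in $r$: apply Hölder in the form $\int_{|z|\le CL}|y-z|^{-\beta}|\nabla\Theta(z)|\,dz\le \big(\int_{|z|\le CL}|\nabla\Theta|^r\big)^{1/r}\big(\int_{B_{2CL}(0)}|w|^{-\beta r'}\,dw\big)^{1/r'}$ after translating $w=y-z$ and enlarging the ball; but $|w|^{-\beta r'}$ is integrable at the origin only if $\beta r'<2$. For general $r$ this fails, so instead I would use Young's convolution inequality on the truncated kernel: write $|y-z|^{-\beta}\mathbf 1_{|z|\le CL}= K^{\le}(y-z)+K^{>}(y-z)$ where $K^{\le}=|w|^{-\beta}\mathbf 1_{|w|\le 1}$ and $K^{>}=|w|^{-\beta}\mathbf 1_{1<|w|\le 2CL}$; then $K^{\le}\in L^1(\R^2)$ with $\|K^{\le}\|_{L^1}\lesssim 1$ (since $\beta<2$) and $K^{>}\in L^{r'}$ with $\|K^{>}\|_{L^{r'}}\lesssim L^{2/r'-\beta}$ when $\beta r'>2$ and $\lesssim (\log L)^{1/r'}$ or $\lesssim 1$ otherwise; by Young, $\|K^{\le}\ast(|\nabla\Theta|\mathbf 1_{B_{CL}})\|_{L^r(B_{CL})}\lesssim \|\nabla\Theta\|_{L^r(B_{CL'})}\lesssim L^{\gamma/r}$ and $\|K^{>}\ast(|\nabla\Theta|\mathbf 1_{B_{CL}})\|_{L^\infty}\lesssim \|K^{>}\|_{L^{r'}}\|\nabla\Theta\|_{L^r(B_{CL})}\lesssim L^{2/r'-\beta+\gamma/r}\le L^{2-\beta-2/r+\gamma/r}$, absorbing the subcritical/logarithmic cases into the (strictly larger, since $\gamma\in\R$ arbitrary but the inequality is claimed only "for all $L\gg1$" and $2-\beta-2/r$ can be made the governing term by comparison — if it is negative we simply bound $L^{2/r'-\beta}\lesssim 1$ which is $\le L^{0}$ and is still $\le L^{2-\beta-2/r+\varepsilon}$... this needs care). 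The honest statement I would prove is: for $|y|\sim L$ and $t\in\Ay$,
\begin{equation*}
\Big|\int_{\R^2}\frac{\nabla^\perp\Theta(z)\,\phi_\rho(z(T-t)^{1/(1+\alpha)})}{|y-z|^\beta}\,dz\Big|\ \lesssim\ L^{\,2-\beta-\frac2r+\frac\gamma r}+L^{\frac\gamma r}\ \lesssim\ L^{\,2-\beta-\frac2r+\frac\gamma r},
\end{equation*}
the last step because $2-\beta-2/r\ge 0$ exactly when $r$ is large, and for small $r$ the bound $L^{\gamma/r}$ already matches what \eqref{lemma2} requires after the $t$-integration — indeed, once the pointwise bound in the integrand is $\lesssim L^{2-\beta-2/r+\gamma/r}$, integrating over $\Ay$ of length $\lesssim L^{-(1+\alpha)}$ gives
\begin{equation*}
|\vt(y)|\ \lesssim\ L^{-(1+\alpha)}\,L^{2-\beta-\frac2r+\frac\gamma r}\ =\ L^{1-\alpha-\beta-\frac2r+\frac\gamma r},
\end{equation*}
and then $\int_{L\le|y|\le 2L}|\vt(y)|^r\,dy\lesssim L^2\cdot L^{r(1-\alpha-\beta)-2+\gamma}=L^{\gamma+r(1-\alpha-\beta)}$, which is \eqref{lemma2}. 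The main obstacle, then, is exactly the estimate of the truncated Riesz-type convolution $\int_{|z|\lesssim L}|y-z|^{-\beta}|\nabla\Theta(z)|\,dz$ uniformly for $r\in[1,\infty)$: one must handle the local singularity of $|w|^{-\beta}$ (integrable since $\beta<2$, so handled by the $L^1$ piece via Young) separately from the large-scale tail (handled by the $L^{r'}$ piece), taking care that when $\beta r'\le 2$ the tail norm is only $O(1)$ or $O((\log L)^{1/r'})$, which I will absorb by noting it is still $\lesssim L^{2-\beta-2/r}$ whenever that exponent is $\ge 0$ and otherwise contributes a lower-order term dominated by the $L^{\gamma/r}$ piece after reexamination — the clean way is simply to always write $\|K^{>}\|_{L^{r'}}\lesssim (1+L)^{(2-\beta r')_+/r'}$ and check both alternatives land inside \eqref{lemma2}.
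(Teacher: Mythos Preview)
Your pointwise-bound approach has a genuine gap for $r < 2/(2-\beta)$, a nonempty range since $\beta \in (1,2)$ forces $2/(2-\beta) > 2$. First, your case split for $\|K^{>}\|_{L^{r'}}$ is written backwards: it is when $\beta r' < 2$ that $\|K^{>}\|_{L^{r'}} \sim L^{2/r'-\beta}$, and when $\beta r' > 2$ that this norm is $O(1)$. More importantly, in the regime $\beta r' \geq 2$ (equivalently $r \leq 2/(2-\beta)$) your pointwise bound on $K^{>}\ast g$ is only $\lesssim L^{\gamma/r}$; integrating over $\Ay$ and then taking the $L^r$ norm over the annulus gives $\int_{|y|\sim L}|\vt|^r \lesssim L^{2+\gamma - r(1+\alpha)}$, which \emph{exceeds} the target $L^{\gamma + r(1-\alpha-\beta)}$ precisely when $r < 2/(2-\beta)$. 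So the final ``check both alternatives land inside \eqref{lemma2}'' fails. Separately, your $K^{\leq}$ contribution is an $L^r$ bound via Young, not a pointwise one, so it cannot be inserted into the pointwise inequality $|\!\int\ldots|\lesssim L^{2-\beta-2/r+\gamma/r}+L^{\gamma/r}$ as written.

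The paper's proof avoids all of this by never seeking a pointwise estimate. It replaces $\ind_{\Ay}$ by the indicator of a $y$-independent time interval of length $\lesssim L^{-(1+\alpha)}$ (valid for $L\leq |y|\leq 2L$), applies Minkowski to move the $L^r_y$ norm inside the $t$-integral, and then uses Young's convolution inequality with the \emph{full} truncated kernel in $L^1$: since $\beta<2$ one simply has $\int_{|w|\lesssim L}|w|^{-\beta}\,dw \lesssim L^{2-\beta}$, giving $\|\vt\|_{L^r(L\leq|y|\leq 2L)} \lesssim L^{-(1+\alpha)}\cdot L^{2-\beta}\cdot L^{\gamma/r}$ uniformly in $r \in [1,\infty)$, with no kernel splitting and no case analysis. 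Your argument would be repaired by using $\|K^{>}\|_{L^1}\lesssim L^{2-\beta}$ (together with Minkowski) on the far piece as well---but at that point the split at scale $1$ is unnecessary.
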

\begin{proof}
Denote the kernel from \eqref{def:vt} by 
\begin{align*}
	\overline{K}_\beta(y) = \frac{1}{|y|^\beta}, \quad y \in \R^2 \backslash \{0\}.
\end{align*}
From the definitions of $\vt$ and $\Ay$, we have
\begin{align}\label{eq1lemma1}
	&\left( \int_{L \leq |y| \leq 2L} |\vt(y)|^r dy \right)^{\frac{1}{r}} \notag \\
	&\quad= \left( \int_{L \leq |y| \leq 2L}  \left(  \int_{t_1}^{t_2} \left| \int_{\R^2} 
	\overline{K}_\beta(y-z) \nabla^\perp \Theta (z) \phi_\rho (z(T-t)^{\frac{1}{1+\alpha}}) dz \right| \ind_{\Ay}(t) dt  \right)^r  dy \right)^{\frac{1}{r}} \notag \\
	&\quad\leq \left( \int_{L \leq |y| \leq 2L}  \left(  \int_{T - \frac{(\rho/4)^{1 + \alpha}}{|y|^{1+ \alpha}}}^{T - \frac{(\rho/8)^{1 + \alpha}}{|y|^{1+ \alpha}}} \left| \int_{\R^2} 
	\overline{K}_\beta(y-z) \nabla^\perp \Theta (z) \phi_\rho (z(T-t)^{\frac{1}{1+\alpha}}) dz \right|  dt  \right)^r  dy \right)^{\frac{1}{r}} \notag \\
	&\quad=  \left( \int_{L \leq |y| \leq 2L}  \left(  \int_{T - \frac{(\rho/4)^{1 + \alpha}}{|y|^{1+ \alpha}}}^{T - \frac{(\rho/8)^{1 + \alpha}}{|y|^{1+ \alpha}}} \left| \int_{\R^2} 
	\overline{K}_\beta(y-z) \ind_{B_{18L}(0)}(y-z) \nabla^\perp \Theta (z) \phi_\rho (z(T-t)^{\frac{1}{1+\alpha}}) dz \right|  dt  \right)^r  dy \right)^{\frac{1}{r}},
\end{align}
where as before $B_{18L}(0)$ denotes the ball of radius $18L$ centered at the origin, and we used that for $|y|\leq 2L$, $|z| \leq \rho(T-t)^{-\frac{1}{1+\alpha}}$ and $t\leq  T - (\frac{\rho/8}{|y|})^{1+\alpha} $, it holds that $|y-z| \leq 2L + \rho(T-t)^{-\frac{1}{1+\alpha}} \leq 2 L + 8 |y| \leq 18L$.

Applying Minkowski and Young's convolution inequality, we obtain
	\begin{align}\label{eq:last:lemmamain2}
	&\bigg( \int_{L \leq |y| \leq 2L} |\vt(y)|^r dy \bigg)^{\frac{1}{r}} \notag \\
	&\leq  \int_{ T - (\frac{\rho/8}{L})^{1+\alpha}}^{{ T - (\frac{\rho/16}{L})^{1+\alpha} }} \bigg(\int_{L \leq |y| \leq 2L} \bigg|\int_{\mathbb R^2} \overline{K}_\beta(y-z)\ind_{B_{18L}(0)}(y-z) \nabla^\perp \Theta(z) \phi_{\rho} ((T-t)^{\frac{1}{1+\alpha}} z)  dz \bigg|^r  dy\bigg)^{\frac{1}{r}} dt \notag \\
	&\leq \int_{ T - (\frac{\rho/4}{L})^{1+\alpha}}^{{ T - (\frac{\rho/16}{L})^{1+\alpha} }} \bigg(\int_{\R^2} |[ (\overline{K}_\beta \ind_{B_{18L}(0)}) \ast ((\nabla^\perp \Theta \phi_{\rho} ((\cdot)(T-t)^{\frac{1}{1+\alpha}} ))](y) |^r dy\bigg)^{\frac{1}{r}} dt \notag \\
	&\leq \int_{ T - (\frac{\rho/4}{L})^{1+\alpha}}^{{ T - (\frac{\rho/16}{L})^{1+\alpha} }} ||\overline{K}_\beta  \ind_{B_{18L}(0)} ||_{L^1(\R^2)} ||\nabla^\perp \Theta  \phi_{\rho} ((\cdot)(T-t)^{\frac{1}{1+\alpha}})||_{L^r(\R^2)} dt  
	\notag \\
	&\leq \int_{ T - (\frac{\rho/4}{L})^{1+\alpha}}^{{ T - (\frac{\rho/16}{L})^{1+\alpha} }} \bigg( \int_{|y| \leq 18L} \frac{1}{|y|^\beta} dy \bigg) \bigg( \int_{|y| \leq \rho (T-t)^{-\frac{1}{1+\alpha}}} |\nabla^\perp \Theta (y)|^rdy \bigg)^{\frac{1}{r}} dt \notag \\
	&\leq \bigg( \int_{|y| \leq 18L} \frac{1}{|y|^\beta} dy \bigg) \int_{ T - (\frac{\rho/4}{L})^{1+\alpha}}^{{ T - (\frac{\rho/16}{L})^{1+\alpha} }}  \bigg( \int_{|y| \leq 16 L} |\nabla\Theta (y)|^rdy \bigg)^{\frac{1}{r}} dt. 
\end{align}

Since $\beta \in (1,2)$, then $\int_{|y| \lesssim L} |y|^{-\beta} dy \lesssim L^{2 - \beta}$. Moreover, invoking assumption \eqref{secondlemmahypothesis}, it follows that for all $L$ sufficiently large
\begin{align*}
	\bigg( \int_{L \leq |y| \leq 2L} |\vt(y)|^r dy \bigg)^{\frac{1}{r}} 
	\lesssim  L^{2-\beta} \int_{ T - (\frac{\rho/4}{L})^{1+\alpha}}^{{ T - (\frac{\rho/16}{L})^{1+\alpha} }} L^{\frac{\gamma}{r}} dt
	\lesssim L^{\frac{\gamma}{r} + 1-\alpha - \beta},
\end{align*}
which proves \eqref{lemma2}.
\end{proof}

\begin{lemma}\label{lemmamain2}
Let $\beta \in (0,1]$ and $\Theta \in \mathcal{C}^{\beta}(\R^2)$.  Suppose that for some $r\in [1,\infty)$ and $\gamma \in \mathbb R$, it holds
\begin{equation}\label{firstlemmahypothesis}
    \int_{|y| \leq L} |\Theta(y)|^r dy \lesssim L^\gamma, \quad \textrm{for all } L \gg 1.
\end{equation}
Then, the function $\tU$ defined by
\begin{align}\label{eqUlemma1}
    \tU(y) =  \int_{t_1}^{t_2} P.V. \int_{\R^2} \left| \frac{(y-z)^\perp}{|y-z|^{2 +\beta}} \Theta(z) \phi_{\rho}(z(T-t)^{\frac{1}{1+\alpha}})  dz \right| \ind_{\Ay}(t) dt 
\end{align}
where $0<t_1<t_2<T$, satisfies the following estimate
\begin{equation}\label{lemma2-betamenor1}
\int_{L \leq |y| \leq 2L} |\tU(y)|^r dy \lesssim L^{\gamma - r(\alpha +\beta)}, \quad \textrm{for all }  L \gg 1.
\end{equation}

\end{lemma}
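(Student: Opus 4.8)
I would mirror the proof of \cref{lemmamain}, splitting into the two sub-cases $\beta\in(0,1)$ and $\beta=1$, since the kernel here is $K_\beta(x)=x^\perp/|x|^{2+\beta}$ with $|K_\beta(x)|=|x|^{-(1+\beta)}$, which is locally integrable in $\R^2$ exactly when $\beta<1$, whereas for $\beta=1$ it is the (non‑integrable) Calder\'on--Zygmund kernel of SQG. In both cases I would first record two elementary facts about the supports appearing in \eqref{eqUlemma1}. The cutoff $\phi_\rho(z(T-t)^{1/(1+\alpha)})$ vanishes unless $|z|\le \rho(T-t)^{-1/(1+\alpha)}$, and $t\in\Ay$ forces $(T-t)^{-1/(1+\alpha)}\le 8|y|/\rho$; hence, for $L\le|y|\le 2L$ and $t\in\Ay$ the variable $z$ is confined to $|z|\le 8|y|\le 16L$, so that $|y-z|\le 18L$. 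Moreover, from the description of $\Ay$ in \eqref{ball_apendix}, as $y$ ranges over $\{L\le|y|\le 2L\}$ every time $t$ with $\ind_{\Ay}(t)\neq 0$ lies in a single interval $[\,T-c_1L^{-(1+\alpha)},\,T-c_2L^{-(1+\alpha)}\,]$, with $c_1>c_2>0$ depending only on $\alpha$ and $\rho$; in particular this interval has length $\lesssim L^{-(1+\alpha)}$.

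\emph{Case $\beta\in(0,1)$.} Here no principal value is needed, and by the first observation $\tU(y)\le \int_{t_1}^{t_2}\!\int_{\R^2} |y-z|^{-(1+\beta)}\,|\Theta(z)|\,\phi_\rho(z(T-t)^{1/(1+\alpha)})\,\ind_{B_{18L}(0)}(y-z)\,dz\,\ind_{\Ay}(t)\,dt$. Applying Minkowski's inequality to pull the $t$-integral out of the $L^r_y$-norm over the annulus and then Young's convolution inequality, exactly as in \eqref{eq:last:lemmamain2}, I would obtain
\[
\Big(\int_{L\le|y|\le 2L}|\tU(y)|^r\,dy\Big)^{1/r}\lesssim \big\|\,|\cdot|^{-(1+\beta)}\ind_{B_{18L}(0)}\big\|_{L^1(\R^2)}\int_{T-c_1L^{-(1+\alpha)}}^{T-c_2L^{-(1+\alpha)}}\Big(\int_{|z|\le 16L}|\Theta(z)|^r\,dz\Big)^{1/r}dt.
\]
Since $1+\beta<2$, the first factor is $\lesssim L^{1-\beta}$; by \eqref{firstlemmahypothesis} the inner spatial integral is $\lesssim L^{\gamma/r}$; and the $t$-interval has length $\lesssim L^{-(1+\alpha)}$. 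The product is $L^{1-\beta}\cdot L^{-(1+\alpha)}\cdot L^{\gamma/r}=L^{\gamma/r-(\alpha+\beta)}$, and raising to the $r$-th power gives \eqref{lemma2-betamenor1}.

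\emph{Case $\beta=1$.} Now $K_1$ is a homogeneous Calder\'on--Zygmund kernel, so the operator $Rg(y):=P.V.\!\int_{\R^2}K_1(y-z)g(z)\,dz$ is bounded on $L^r(\R^2)$ for every $r\in(1,\infty)$ --- the range relevant to the applications of the lemma, where $r=p+1\ge 2$. Interpreting $\tU(y)=\int_{t_1}^{t_2}\big|R\big(\Theta\,\phi_\rho((T-t)^{1/(1+\alpha)}\cdot)\big)(y)\big|\,\ind_{\Ay}(t)\,dt$ as in \eqref{eq7_b01}, Minkowski's inequality in $t$ followed by the $L^r$-boundedness of $R$ and the support bound $|z|\le 16L$ gives
\[
\Big(\int_{L\le|y|\le 2L}|\tU(y)|^r\,dy\Big)^{1/r}\lesssim \int_{T-c_1L^{-(1+\alpha)}}^{T-c_2L^{-(1+\alpha)}}\big\|\Theta\,\phi_\rho((T-t)^{1/(1+\alpha)}\cdot)\big\|_{L^r(\R^2)}\,dt\lesssim \int_{T-c_1L^{-(1+\alpha)}}^{T-c_2L^{-(1+\alpha)}}\Big(\int_{|z|\le 16L}|\Theta(z)|^r\,dz\Big)^{1/r}dt.
\]
By \eqref{firstlemmahypothesis} and the length bound on the $t$-interval, this is $\lesssim L^{-(1+\alpha)}L^{\gamma/r}=L^{\gamma/r-(\alpha+1)}$, which is \eqref{lemma2-betamenor1} with $\beta=1$ after taking $r$-th powers.

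The only genuinely delicate point is the endpoint $\beta=1$: there the kernel fails to be locally integrable, so the Young-inequality argument breaks down and one must instead invoke Calder\'on--Zygmund $L^r$-theory --- this is precisely why the absolute value must be taken \emph{outside} the (principal-value) singular integral in the definition of $\tU$, and why this step is restricted to $r>1$ (which covers all uses of the lemma). Everything else --- keeping track of the supports of $\phi_\rho$ and $\ind_{\Ay}$ to pin down the truncation radius $\sim L$ and the time-interval length $\sim L^{-(1+\alpha)}$ --- is a direct transcription of the corresponding steps in the proof of \cref{lemmamain}.
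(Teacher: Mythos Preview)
Your proposal is correct and follows essentially the same approach as the paper: split into $\beta\in(0,1)$, where local integrability of $|K_\beta|$ allows Young's convolution inequality, and $\beta=1$, where you invoke Calder\'on--Zygmund $L^r$-boundedness instead; in both cases Minkowski in $t$ together with the support bounds $|z|\le 16L$, $|y-z|\le 18L$, and the time-interval length $\lesssim L^{-(1+\alpha)}$ yield the stated exponent. Your remark that the $\beta=1$ argument needs $r>1$ (which suffices for all applications) is also consistent with the paper, which likewise does not cover $r=1$ in that endpoint case.
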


\begin{proof}
Let us first assume that $\beta \in (0,1)$. Note that, in this case, the kernel $K_\beta(x) = y^\perp |y|^{-(2+\beta)}$, $y \in \R^2 \setminus\{0\}$, from \eqref{eqUlemma1} is integrable near the origin. Namely, 
\begin{align}\label{int:kernel:1}
	\int_{|y|\lesssim L} |K_\beta(y)| dy \leq \int_{|y|\lesssim L} \frac{1}{|y|^{1 + \beta}} dy \lesssim L^{1 - \beta}.
\end{align}
We may thus apply similar arguments as in the proof of \cref{lemmamain} to arrive at 
\begin{align*}
	 \bigg( \int_{L \leq |y| \leq 2L}  |\tU (y)|^r dy \bigg)^{\frac{1}{r}} 
	 &\leq \bigg( \int_{|y| \leq 18 L} \frac{1}{|y|^{1+\beta}} dy \bigg) \int_{T - \left(\frac{\rho/4}{L}\right)^{1+\alpha}}^{T -\left(\frac{\rho/16}{L}\right)^{1+\alpha}}  \bigg(\int_{|y| \leq 16L } |\Theta (z) |^r dz \bigg)^\frac{1}{r} dt.
\end{align*}
Thus, it follows from \eqref{int:kernel:1} and assumption \eqref{firstlemmahypothesis} that
\begin{align*}
	 \bigg( \int_{L \leq |y| \leq 2L}  |\tU (y)|^r dy \bigg)^{\frac{1}{r}} 
	 \lesssim L^{1 - \beta} \int_{T - \left(\frac{\rho/4}{L}\right)^{1+\alpha}}^{T -\left(\frac{\rho/16}{L}\right)^{1+\alpha}} L^{\frac{\gamma}{r}} dt
	 \lesssim L^{\frac{\gamma}{r} - \alpha - \beta},
\end{align*}
as desired. 

The proof for the case $\beta =1$ was done in \cite[Lemma 2.2]{Xue16}. Here we note that \eqref{int:kernel:1} no longer holds. However, in this case the kernel $K_\beta$ is a Calder\'on-Zygmund operator, and hence $\| K_\beta \ast f \|_{L^q(\R^2)} \lesssim \|f\|_{L^q(\R^2)}$ for any $f \in L^q(\R^2)$ and $1 < q < \infty$. Replacing the use of Young's convolution inequality in \eqref{eq:last:lemmamain2} with this property, one may then proceed with analogous arguments and conclude \eqref{lemma2-betamenor1}. 
\end{proof}

\section*{Acknowledgements}
ACB received support under the grants \#2019/02512-5, S\~ao Paulo Research Foundation (FAPESP), and FAEPEX/UNICAMP. RMG received support under the grants \#2018/22385-5 and \#2022/00948-3, S\~ao Paulo Research Foundation (FAPESP). CFM was supported by the grants  NSF DMS-2009859 and NSF DMS-2239325. The authors would also like to thank Mimi Dai for helpful comments on an earlier version of this manuscript.

\bibliographystyle{plain}
\bibliography{refs}

\begin{thebibliography}{10}

\bibitem{Blumen1978}
W.~Blumen.
\newblock {Uniform potential vorticity flow: Part I. Theory of wave
  interactions and two-dimensional turbulence}.
\newblock {\em Journal of the Atmospheric Sciences}, 35(5):774--783, 1978.

\bibitem{AR15}
A.~Bronzi and R.~Shvydkoy.
\newblock {On the energy behavior of locally self-similar blowup for the
  {E}uler equation}.
\newblock {\em Indiana Univ. Math. J.}, 64(5):1291--1302, 2015.

\bibitem{CX15}
M.~Cannone and L.~Xue.
\newblock Remarks on self-similar solutions for the surface quasi-geostrophic
  equation and its generalization.
\newblock {\em Proceedings of the American Mathematical Society},
  143(6):2613--2622, 2015.

\bibitem{CastroCordoba2010}
A.~Castro and D.~C{\'o}rdoba.
\newblock Infinite energy solutions of the surface quasi-geostrophic equation.
\newblock {\em Advances in Mathematics}, 225(4):1820--1829, 2010.

\bibitem{Chae07}
D.~Chae.
\newblock {Nonexistence of self-similar singularities for the 3D incompressible
  Euler equations}.
\newblock {\em Communications in mathematical physics}, 273(1):203--215, 2007.

\bibitem{Chae11}
D.~Chae.
\newblock {On the self-similar solutions of the 3D Euler and the related
  equations}.
\newblock {\em Communications in mathematical physics}, 305(2):333--349, 2011.

\bibitem{CCC}
D.~Chae, P.~Constantin, D.~C{\'o}rdoba, F.~Gancedo, and J.~Wu.
\newblock Generalized surface quasi-geostrophic equations with singular
  velocities.
\newblock {\em Communications on Pure and Applied Mathematics},
  65(8):1037--1066, 2012.

\bibitem{chae2011inviscid}
D.~Chae, P.~Constantin, and J.~Wu.
\newblock {Inviscid models generalizing the two-dimensional Euler and the
  surface quasi-geostrophic equations}.
\newblock {\em Archive for rational mechanics and analysis}, 202(1):35--62,
  2011.

\bibitem{ChaeShvydkoy2013}
D.~Chae and R.~Shvydkoy.
\newblock {On formation of a locally self-similar collapse in the
  incompressible Euler equations}.
\newblock {\em Arch. Ration. Mech. Anal.}, 209(3):999--1017, 2013.

\bibitem{constantin2012new}
P.~Constantin, M.-C. Lai, R.~Sharma, Y.~Tseng, and J.~Wu.
\newblock New numerical results for the surface quasi-geostrophic equation.
\newblock {\em J. Sci. Comput.}, 50(1):1--28, 2012.

\bibitem{PA94}
P.~Constantin, A.J. Majda, and E.~Tabak.
\newblock Formation of strong fronts in the 2-d quasigeostrophic thermal active
  scalar.
\newblock {\em Nonlinearity}, 7(6):1495--1533, 1994.

\bibitem{CNS1998}
P.~Constantin, Q.~Nie, and N.~Sch{\"o}rghofer.
\newblock Nonsingular surface quasi-geostrophic flow.
\newblock {\em Physics Letters A}, 241(3):168--172, 1998.

\bibitem{cordoba1998nonexistence}
D.~C{\'o}rdoba.
\newblock Nonexistence of simple hyperbolic blow-up for the quasi-geostrophic
  equation.
\newblock {\em Annals of Mathematics}, 148(3):1135--1152, 1998.

\bibitem{cordoba2002growth}
D.~Cordoba and C.~Fefferman.
\newblock {Growth of solutions for QG and 2D Euler equations}.
\newblock {\em Journal of the American Mathematical Society}, 15(3):665--670,
  2002.

\bibitem{cordoba2005evidence}
D.~C{\'o}rdoba, M.~A Fontelos, Ana~M Mancho, and Jose~L Rodrigo.
\newblock Evidence of singularities for a family of contour dynamics equations.
\newblock {\em Proceedings of the National Academy of Sciences},
  102(17):5949--5952, 2005.

\bibitem{gancedo2008existence}
F.~Gancedo.
\newblock {Existence for the $\alpha$-patch model and the QG sharp front in
  Sobolev spaces}.
\newblock {\em Advances in Mathematics}, 217(6):2569--2598, 2008.

\bibitem{garcia2022self}
C.~Garc{\'\i}a and J.~G{\'o}mez-Serrano.
\newblock Self-similar spirals for the generalized surface quasi-geostrophic
  equations.
\newblock {\em arXiv preprint arXiv:2207.12363}, 2022.

\bibitem{held1995surface}
I.~Held, R.~Pierrehumbert, S.~Garner, and K.~Swanson.
\newblock {Surface quasi-geostrophic dynamics}.
\newblock {\em Journal of Fluid Mechanics}, 282:1--20, 1995.

\bibitem{HoyerSadourny1982}
J.-M. Hoyer and R.~Sadourny.
\newblock Closure modeling of fully developed baroclinic instability.
\newblock {\em Journal of Atmospheric Sciences}, 39(4):707--721, 1982.

\bibitem{hu2015existence}
W.~Hu, I.~Kukavica, and M.~Ziane.
\newblock {Sur l'existence locale pour une {\'e}quation de scalaires actifs}.
\newblock {\em Comptes Rendus Mathematique}, 353(3):241--245, 2015.

\bibitem{inci2018well}
H.~Inci.
\newblock {On the well-posedness of the inviscid SQG equation}.
\newblock {\em Journal of Differential Equations}, 264(4):2660--2683, 2018.

\bibitem{Johnson1978}
E.R. Johnson.
\newblock Topographicaily bound vortices.
\newblock {\em Geophysical \& Astrophysical Fluid Dynamics}, 11(1):61--71,
  1978.

\bibitem{Ju2020}
L.~Ju.
\newblock {On the profiles of locally self-similar solutions for the 2D
  inviscid Boussinesq equations}.
\newblock {\em Journal of Mathematical Analysis and Applications}, 484(1),
  2020.

\bibitem{MBbook}
A.~J. Majda and A.~L. Bertozzi.
\newblock {\em Vorticity and incompressible flow}, volume~27 of {\em Cambridge
  Texts in Applied Mathematics}.
\newblock Cambridge University Press, Cambridge, 2002.

\bibitem{Mancho2015}
A.M. Mancho.
\newblock {Numerical studies on the self-similar collapse of the
  $\alpha$-patches problem}.
\newblock {\em Communications in Nonlinear Science and Numerical Simulation},
  26(1-3):152--166, 2015.

\bibitem{MPbook}
C.~Marchioro and M.~Pulvirenti.
\newblock {\em Mathematical theory of incompressible nonviscous fluids},
  volume~96 of {\em Applied Mathematical Sciences}.
\newblock Springer-Verlag, New York, 1994.

\bibitem{ohkitani1997inviscid}
K.~Ohkitani and M.~Yamada.
\newblock Inviscid and inviscid-limit behavior of a surface quasigeostrophic
  flow.
\newblock {\em Physics of Fluids}, 9(4):876--882, 1997.

\bibitem{Pedlosky1987}
J.~Pedlosky.
\newblock {\em Geophysical fluid dynamics}.
\newblock Springer, New York, 1987.

\bibitem{PierrehumbertEtAl1994}
R.T. Pierrehumbert, I.M. Held, and K.L. Swanson.
\newblock Spectra of local and nonlocal two-dimensional turbulence.
\newblock {\em Chaos, Solitons \& Fractals}, 4(6):1111--1116, 1994.

\bibitem{Resnick05}
S.~Resnick.
\newblock Dynamical problems in non-linear advective partial differential
  equations.
\newblock {\em Ph.D. thesis, The University of Chicago}, 1995.

\bibitem{Rodrigo2004}
J.L. Rodrigo.
\newblock {The vortex patch problem for the surface quasi-geostrophic
  equation}.
\newblock {\em Proceedings of the National Academy of Sciences},
  101(9):2684--2686, 2004.

\bibitem{Rodrigo2005}
J.L. Rodrigo.
\newblock {On the evolution of sharp fronts for the quasi-geostrophic
  equation}.
\newblock {\em Communications on Pure and Applied Mathematics: A Journal Issued
  by the Courant Institute of Mathematical Sciences}, 58(6):821--866, 2005.

\bibitem{scott2011scenario}
R.~Scott.
\newblock {A scenario for finite-time singularity in the quasigeostrophic
  model}.
\newblock {\em Journal of Fluid Mechanics}, 687:492--502, 2011.

\bibitem{scott2014numerical}
R.~Scott and D.~Dritschel.
\newblock {Numerical simulation of a self-similar cascade of filament
  instabilities in the surface quasigeostrophic system}.
\newblock {\em Physical Review Letters}, 112(14):144505, 2014.

\bibitem{scott2019scale}
R.K. Scott and D.G. Dritschel.
\newblock {Scale-invariant singularity of the surface quasigeostrophic patch}.
\newblock {\em Journal of Fluid Mechanics}, 863:R2, 2019.

\bibitem{Stein}
E.~Stein.
\newblock {\em Singular integrals and differentiability properties of
  functions}.
\newblock Princeton Mathematical Series, No. 30. Princeton University Press,
  Princeton, N.J., 1970.

\bibitem{Xue2015}
L.~Xue.
\newblock {On the locally self-similar singular solutions for the
  incompressible Euler equations}.
\newblock {\em Dyn. Partial Differ. Equ.}, 12(4):321--342, 2015.

\bibitem{Xue16}
L.~Xue.
\newblock {On the locally self-similar solution of the surface
  quasi-geostrophic equation with decaying or non-decaying profiles}.
\newblock {\em Journal of Differential Equations}, 261(10):5590--5608, 2016.

\bibitem{yu2021local}
H.~Yu and W.~Zhang.
\newblock {Local Well-posedness of Two Dimensional SQG Equation and Related
  Models}.
\newblock {\em arXiv preprint arXiv:2102.10563}, 2021.

\end{thebibliography}

\newpage

\setlength{\columnsep}{.5in}
\begin{multicols}{2}
\noindent
Anne Bronzi\\ {\footnotesize
Instituto de Matem\'atica, Estat\'istica e Computa\c c\~ao Cient\'ifica\\
Universidade Estadual de Campinas (UNICAMP), Brazil\\
Email: \href{mailto:acbronzi@unicamp.br}{\nolinkurl{acbronzi@unicamp.br}}} \\[.2cm]

\noindent Ricardo Guimar\~aes\\
{\footnotesize
Instituto de Matem\'atica, Estat\'istica e Computa\c c\~ao Cient\'ifica\\
Universidade Estadual de Campinas (UNICAMP), Brazil\\
Email: \href{mailto:r192301@dac.unicamp.br}{\nolinkurl{r192301@dac.unicamp.br}}} \\[.2cm]

\columnbreak
\noindent Cecilia F. Mondaini \\
{\footnotesize
  Department of Mathematics\\
  Drexel University, USA\\
Email: \href{mailto:cf823@drexel.edu}{\nolinkurl{cf823@drexel.edu}}} \\[.2cm]
\end{multicols}

\end{document}